\documentclass{amsart}

\newtheorem{theorem}{Theorem}
\newtheorem{lemma}[theorem]{Lemma}

\numberwithin{theorem}{section} 

\newtheorem{definition}[theorem]{Definition}

\usepackage{changes}
\definechangesauthor[name={Yan}, color=blue]{YZ}
\definechangesauthor[name={Beeson}, color=purple]{MB}

\usepackage{amsfonts}
\usepackage[section]{placeins}
\usepackage{comment}
\usepackage{amsmath}
\usepackage{array}
\usepackage{xcolor}
\usepackage{varwidth}
\usepackage{verbatim}
\usepackage{graphicx}
\usepackage{hyperref}

\def\R{{\mathbb R}}

\def\A{{\mathcal A}}
 
\def\C{{\mathcal C}} 
\def\zh{{\Phi}}

\author[Beeson]{Michael Beeson}
\address{San Jos\'e State University (emeritus), and UCSC (research associate)}

\author[Zhang]{Yan X Zhang}
\address{San Jos\'e State University}

\title{Rationality of certain triangle tilings}
\date{\today}

\begin{document}

\begin{abstract}
We consider tilings of a triangle $ABC$ by congruent copies of a triangle
that has one angle equal to $120^\circ$, has non-commensurable angles
(that is, not all angles are rational multiples of $\pi$), and is not
similar to $ABC$. We prove that any such tiling has commensurable sides,
meaning that the side lengths can be taken to be integers after scaling.
\end{abstract}
\maketitle

\section{Introduction and Motivation}

A \emph{tiling} of a triangle $ABC$, or more generally of a polygon, is an expression of $ABC$ as 
the union of non-overlapping congruent copies of a smaller triangle called the \emph{tile}. 
We use the notation $(a,b,c)$ for the sides of the triangle, and $(\alpha,\beta,\gamma)$ for
the angles opposite $(a,b,c)$ respectively. 
This is far from the first paper on the subject of tilings by triangles, and we do not provide a general introduction here.  

Two recurring conditions in studying tilings are:
\begin{itemize}
    \item we say that the tile has \emph{commensurable angles} if all angles are rational multiples of $\pi$, and \emph{non-commensurable angles} otherwise.
    \item we say that the tile has \emph{commensurable sides} if the pairwise ratios of the sides are all rational (equivalently, after a suitable scaling, they can be taken to be integers), and \emph{non-commensurable sides} otherwise.
\end{itemize}
Whether these conditions apply changes what techniques are available to decide whether and how $ABC$ can be tiled by different types of tiles. 

\begin{table}[ht]
\caption{Tilings when not all angles are rational multiples of $\pi$.}
\label{table:laczkovich2}
\centering
\setlength{\extrarowheight}{.5em}
\begin{tabular}{lr}
$ABC$  &  The tile     \\
\hline
$(\alpha,\beta,\gamma)$  &  similar to $ABC$ \\
equilateral              & $\alpha = \pi/3$ \\
$(\alpha,\alpha,2\beta)$ &  $\gamma = \pi/2$ \\
$(\alpha,\alpha,\pi-2\alpha)$  & $\gamma = 2\alpha$\\
$(2\alpha,\beta,\alpha+\beta)$ & $3\alpha + 2\beta = \pi$\\
$(2\alpha,\alpha,2\beta)$ &$3\alpha + 2\beta = \pi$\\
isosceles &$3\alpha + 2\beta = \pi$\\
$(\alpha,\alpha,\pi-2\alpha)$ & $\gamma = 2\pi/3$ \\
$(\alpha,2\alpha,\pi-3\alpha)$ & $\gamma = 2\pi/3$ \\
$(\alpha, 2 \beta, 2\alpha + \beta)$ & $\gamma = 2\pi/3$\\
$(\alpha, \alpha+\beta,\alpha+2\beta)$ & $\gamma = 2\pi/3$\\
$(2\alpha, 2\beta, \alpha+\beta)$ & $\gamma = 2\pi/3$ \\
equilateral  & $\gamma = 2\pi/3$
\end{tabular}
\end{table}

The commensurable angles case is mostly settled by Theorems 5.1 and 5.3 of \cite{laczkovich1995}, so we focus on the more difficult noncommensurable angles case. In Table~\ref{table:laczkovich2}, we exhibit a list of potential tilings when the tile has noncommensurable angles, as a consequence of Laczkovich's Theorem~4.1 of \cite{laczkovich1995}. Note that a particularly prevalent case is when the tile has one angle $2\pi/3$; the possible tilings by such tiles are not yet understood. The
known examples all have commensurable sides and more
than a thousand tiles, which suggests the following theorem:
\begin{theorem} \label{theorem:main}
Suppose a triangle $ABC$ is tiled by a tile $R$ with  sides $(a,b,c)$ and noncommensurable angles $(\alpha,\beta,\gamma = 2\pi/3)$. If $ABC$ is not similar to $R$, 
then $R$ has commensurable sides.
\end{theorem}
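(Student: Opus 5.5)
The idea is to combine the linear relations forced on the sides by the tiling with the algebraic relation $c^2 = a^2+ab+b^2$ coming from the $120^\circ$ angle. Since the noncommensurable-angle hypothesis rules out most coincidences among $\alpha,\beta$, the sides should satisfy a nontrivial rational linear relation, and together with $c^2=a^2+ab+b^2$ this forces rationality.

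\textbf{Step 1: the side lengths of $ABC$.} Since $\alpha+\beta=\pi/3$ and $\alpha/\pi$ is irrational, every vertex of the tiling gives an angle relation $p\alpha+q\beta+r\gamma = \pi$ or $2\pi$ with nonnegative integers $p,q,r$. Such a relation forces $p=q$. By Table~\ref{table:laczkovich2}, the angles of $ABC$ are one of the listed forms in $\alpha,\beta$. Each side of $ABC$ is a union of tile edges, so
\[
|AB| = n_1 a + m_1 b + k_1 c,
\]
and similarly for the other two sides, with nonnegative integers $n_i,m_i,k_i$. The law of sines gives another expression for the same lengths. With $\sin\gamma=\tfrac{\sqrt3}{2}$, we have $a/\sin\alpha = b/\sin\beta = c/\sin\gamma$, and the angles of $ABC$ are integer combinations of $\alpha,\beta$. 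Hence the ratios of the sides of $ABC$ are explicit trigonometric expressions, which become polynomial in $a,b,c$ via multiple-angle formulas. For example, $\sin 2\alpha = 2\sin\alpha\cos\alpha$, and $\cos\alpha = (b^2+c^2-a^2)/(2bc)$.

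\textbf{Step 2: reduce to a polynomial identity.} Normalize $c=1$. Write $t=a/b$, or parametrize the conic $a^2+ab+b^2=1$ rationally. Equating the edge-count expressions with the law-of-sines expressions for the ratios of the sides of $ABC$ gives a polynomial identity $P(a,b)=0$ with integer coefficients depending on the $n_i,m_i,k_i$. The goal is to show that this relation, combined with the conic, forces $a,b\in\mathbb Q$. A cleaner route is to work in the field $\mathbb Q(a,b)$ and use the Galois-conjugation argument of Laczkovich. If $a/b$ were irrational, or even transcendental, then any polynomial identity between $a$ and $b$ that holds must hold identically on a whole component of the conic. I would then argue that, for each shape in the table, the identity cannot hold identically. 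This can be checked by specializing, for instance to a degenerate limit $\alpha\to 0$, where the edge-count relation yields a contradiction with positivity of the coefficients.

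\textbf{Step 3: the algebraic but irrational case.} If $a/b$ is algebraic but irrational, I would apply a nontrivial field automorphism, or the Laczkovich argument with additive functions on $\mathbb R$ over $\mathbb Q$, to the linear relations $\sum$ (edge counts) $=0$ expressing side equalities. The aim is to derive that the tiling's edge relations force $a,b,c$ to lie in a one-dimensional $\mathbb Q$-space.

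\textbf{Main obstacle.} I expect the hardest part to be handling each row of the table with $\gamma=2\pi/3$ uniformly. For each shape, one must show that the polynomial identity is not satisfied identically along the conic. Here I would first try to exploit the fact that the integer coefficients $n_i,m_i,k_i$ are nonnegative, because this yields sign contradictions in the limiting cases.
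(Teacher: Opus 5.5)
There is a genuine gap. Your argument uses only boundary data: the sides of $ABC$ written as edge counts and matched against the law of sines. The key claim of Step~2 is that the resulting identity cannot hold identically on the conic $c^2=a^2+ab+b^2$. That claim is false, and it fails exactly in the cases that make the theorem hard.

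\begin{itemize}
\item If $ABC$ is equilateral and every boundary tile has its $c$-edge on the boundary (the residual case of Theorem~\ref{theorem:rationaltile120}), every side equals $Xc$. Your identity becomes $Xc=Xc$, which holds for every tile.
\item If $ABC$ has angles $(\alpha,\alpha,\pi-2\alpha)$, take legs $kc$ and base $k(a+2b)$. These satisfy $\text{base}/\text{leg}=2\cos\alpha=(a+2b)/c$ identically on the conic, with nonnegative coefficients. So no degeneration $\alpha\to 0$ or sign argument can produce a contradiction.
\item If $ABC$ has angles $(2\alpha,2\beta,\pi/3)$ and sides $kc,\ell c,mc$, the boundary ratios do force $a/b\in\mathbb Q$. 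But all three sides are multiples of $c$, so they say nothing about $c/a$.
\end{itemize}

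Adding the area equation does not rescue this. In the equilateral case it reads $X^2c^2=Nab$, which only gives $a/b+b/a\in\mathbb Q$. A conjugate pair of quadratic irrationals $t,1/t$ satisfies this, and conjugation just swaps $a$ and $b$. Step~3 does not fill the hole either. The relations from internal segments may all be balanced (equal numbers of $a$'s, $b$'s and $c$'s on the two sides), and you give no mechanism that forces an unbalanced one.

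The missing idea is to extract nontrivial relations from the combinatorics of the interior. The paper does this in two stages.

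\emph{First stage: Laczkovich's graphs.} Counting (number of $\alpha$'s) $+$ (number of $\beta$'s) $-\,2\,$(number of $\gamma$'s) over all vertices gives $\mathcal C-\mathcal S=2\mathcal S_2+1>0$. Suppose there is no $a$-relation. In $\Gamma_a$, out-degree minus in-degree is then $1$ at centers, $\ge -1$ at stars, and $\ge 0$ elsewhere (using Lemma~\ref{lemma:extendlink-a} at simple vertices). So the degree sum would be positive instead of $0$. The same holds for $b$, so $a/b\in\mathbb Q$. The analogous argument in $\Gamma_c$, via Lemma~\ref{lemma:extendlink}, gives a $c$-relation unless all boundary tiles have their $c$-edges on the boundary and $ABC$ is equilateral or $(2\alpha,2\beta,\pi/3)$.

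\emph{Second stage: the invariant $\Phi$.} In those two residual cases, take $a,b$ integral and $c$ irrational. Then each internal segment carries equally many $c$-edges on both sides. So the $c$-internal tiles pair into (possibly virtual) kites and parallelograms, on which $\Phi$ vanishes. Sawtooth tiles glued along the boundary pair off the boundary tiles. Then $\Phi(\partial T')$ equals $3X(b-a)$ or $(b-a)(k+\ell-m)$ respectively, which is nonzero since $a\neq b$ and $k+\ell>m$.

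Nothing in your proposal plays the role of either ingredient.
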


Laczkovich's work in \cite{laczkovich2012} contains a deep exploration of tilings (of not only triangles but also convex polygons), in many of the cases proving that the tiles must have commensurable sides. 
However, the results do not immediately settle our theorem for two reasons: First, the main results in \cite{laczkovich2012} (such as Theorem 2.1) assume ``regular tilings,'' a condition that does not apply to
the $\gamma = 2\pi/3$ cases unless $ABC$ is equilateral. Second, the relevant case of a $2\pi/3$ angle is flawed at line 13 of the proof of Lemma~7.1 of \cite{laczkovich2012}, and we could not easily repair it. We devote an Appendix to explaining both the flaw and the difficulty of repairing it, and in the body of our paper supply a self-contained proof of this $\gamma = 2\pi/3$ case, with no dependencies on previous literature\footnote{In private communication, Laczokovich fixed the issue with an independent proof, to appear on the Mathematical arXiv.}. 

Here is a brief description of the proof of
Theorem~\ref{theorem:main}.
After some preliminary setup in Section~\ref{sec:preliminaries}, we  prove $a/b$ is rational in Section~\ref{sec:ab}, using methods
from  \cite{beeson-isosceles} that in turn go back to Laczkovich.%
\footnote{   
The paper \cite{beeson-isosceles} did not reference the theorem in question here, because 
it was required to prove not only that the ratios of sides are rational, but that rational
relations between them are ``witnessed' in the tiling.} 
These methods are sufficient to prove most of the cases, but leave two exceptional cases that are more difficult, namely the equilateral triangle and a triangle with angles $(2\alpha, 2\beta, \pi/3)$. In Section~\ref{sec:invariant}, we introduce 
Laczkovich's invariant $\zh (T)$ of a tiling $T$, which is computed
as a sum of $\zh (PQR)$ over the tiles $PQR$, adding a term for each directed edge
of a tile.  
The term for each directed edge is a signed length, where the sign depends on the direction of the edge.  Finally, we use this invariant to finish the remaining two cases in Section~\ref{sec:proof-main}, proving Theorem~\ref{theorem:main} in a self-contained manner. Finally, in Section~\ref{sec:conclusion}, we combine Theorem~\ref{theorem:main} with  results already in the literature to arrive at the following theorem: 
\begin{theorem}
    \label{theorem:main-2} Let triangle $ABC$ be tiled by a tile $R$ such that
    \begin{itemize}
        \item $R$ is not similar to ABC;
        \item $R$ is not a right triangle;
        \item $R$ does not have commensurable angles.
    \end{itemize}
    Then $R$ must have commensurable sides.
\end{theorem}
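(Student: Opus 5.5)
The plan is to reduce Theorem~\ref{theorem:main-2} to Theorem~\ref{theorem:main} together with the classification of tilings by tiles with non-commensurable angles, i.e.\ Table~\ref{table:laczkovich2}, which comes from Theorem~4.1 of \cite{laczkovich1995}. Since $R$ does not have commensurable angles and is not similar to $ABC$, the pair $(ABC, R)$ must occur in one of the rows of the table other than the first. We then go through the remaining rows and dispose of each group.

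First, we remove the rows excluded by hypothesis. The rows with $\gamma = \pi/2$ have a right-triangle tile and are excluded. The rows with $\gamma = 2\pi/3$ (the last five rows together with the row $(\alpha,\alpha,\pi-2\alpha)$, $\gamma = 2\pi/3$) are handled by Theorem~\ref{theorem:main}, which gives commensurable sides directly.

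The remaining rows are the equilateral $ABC$ with $\alpha = \pi/3$, the isosceles row with $\gamma = 2\alpha$, and the three rows with $3\alpha+2\beta=\pi$. For these we invoke the results of \cite{laczkovich2012}.
\begin{itemize}
\item When $ABC$ is equilateral, the tilings are regular in Laczkovich's sense, so Theorem~2.1 there gives commensurable sides.
\item For the $3\alpha+2\beta=\pi$ and $\gamma=2\alpha$ cases, the relevant lemmas of \cite{laczkovich2012} (other than the flawed $2\pi/3$ case of Lemma~7.1) should show directly that the sides are commensurable.
\end{itemize}
If some case is not literally covered there, we would fall back on the method of Section~\ref{sec:ab} and Laczkovich's invariant $\zh$ from Section~\ref{sec:invariant}, as follows. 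Write the side lengths of $ABC$ as integer combinations of $a,b,c$. Use the law of sines to express $a,b,c$ through $\sin\alpha,\sin\beta,\sin\gamma$. Then use the linear relation among the angles and the non-commensurability of $\alpha$ to force a rational dependence among $a,b,c$.

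The main obstacle is the bookkeeping. We must check that every row of the table is covered by a result whose hypotheses genuinely hold, in particular the regularity hypothesis of \cite{laczkovich2012}, which fails for many of these rows. For the $3\alpha+2\beta=\pi$ rows I would first check whether the tilings are forced to be regular. This is plausible because no vertex can accommodate $\gamma$ together with a combination of $\alpha$ and $\beta$ except in the patterns $\alpha+\beta+\gamma=\pi$ and $3\alpha+2\beta=\pi$. If they are regular, Theorem~2.1 of \cite{laczkovich2012} would apply.
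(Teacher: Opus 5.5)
Your reduction is the paper's: invoke Table~\ref{table:laczkovich2}, discard the similar and right-tile rows, and dispose of all six $\gamma=2\pi/3$ rows by Theorem~\ref{theorem:main}. The gap is in the other rows: equilateral $ABC$ with $\alpha=\pi/3$, $(\alpha,\alpha,\pi-2\alpha)$ with $\gamma=2\alpha$, and the three $3\alpha+2\beta=\pi$ rows. These are exactly what Theorem~\ref{theorem:main-2} adds to Theorem~\ref{theorem:main}. For the last four you name no result, only that \cite{laczkovich2012} ``should'' cover them, and the check you propose cannot succeed. The main results there assume a regular tiling: two of the angles occur equally often at every vertex, corners of $ABC$ included. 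This is why the paper says the $(2\alpha,2\beta,\pi/3)$ tilings are not regular but the equilateral ones are. Since $\alpha/\pi$ is irrational, the corner angles of $ABC$ decompose in a forced way. In the $3\alpha+2\beta=\pi$ rows the three corners together carry three $\alpha$'s, two $\beta$'s and no $\gamma$. In the $\gamma=2\alpha$ row they carry three $\alpha$'s, one $\beta$ and no $\gamma$. No two of these totals agree, so no pair of angles is balanced at every corner, and these tilings are never regular. The vertex pattern $3\alpha+2\beta=\pi$ you cite as evidence is itself unbalanced for every pair.

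Your fallback is not an argument either. Writing the sides of $ABC$ as integer combinations of $a,b,c$ and applying the law of sines gives at most algebraic conditions on $\alpha$. These do not force $a:b:c$ to be rational; if the three sides of an equilateral $ABC$ carry the same combination, you learn nothing. A single rational relation among $a,b,c$ would not give commensurability anyway: Section~\ref{sec:ab} needs an $a$-relation and a $b$-relation just to get $a/b\in\mathbb{Q}$, and then a $c$-relation.

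The paper closes these rows by citing results with their own substantial proofs. For equilateral $ABC$ with $\alpha=\pi/3$ it cites Lemma~3.2 and Theorem~3.3 of \cite{laczkovich2012}; this is the one row here where regularity does hold, since $\beta+\gamma=2\pi/3$ with $\beta/\pi\notin\mathbb{Q}$ forces $\#\beta=\#\gamma$ at every vertex. For $\gamma=2\alpha$ it cites Theorem~10.5 of \cite{beeson-isosceles}, and for the three $3\alpha+2\beta=\pi$ rows Theorem~3 of \cite{beeson-triquadratics}. Without these, or proofs of comparable depth, your proposal establishes Theorem~\ref{theorem:main-2} only for the $\gamma=2\pi/3$ rows. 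It also covers the equilateral $\alpha=\pi/3$ row, provided your appeal to Theorem~2.1 there is accurate.
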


\section{Preliminaries}
\label{sec:preliminaries}

In this paper, we will usually assume that the tile $R$ is a triangle with side lengths $(a,b,c)$ and corresponding angles $(\alpha,\beta,\gamma)$ respectively, with $\gamma = 2\pi/3$ and non-commensurable angles (which implies that $\alpha, \beta$ are not rational multiples of $\pi$ since $\gamma$ is). In this section, we give some definitions and tools that provide control over the structure of the tiling.

\subsection{Laczkovich's graphs}

The graphs $\Gamma_a$, $\Gamma_b$, $\Gamma_c$ are defined in in \cite[p.~346]{laczkovich2012}. They track different types of segments that can be used to write down linear relations between the side lengths of tiles.   The elements of these graphs are points.   Normally
graphs have ``nodes'' and ``edges''.   We use ``link'' instead of ``edge'' with 
these graphs, to avoid confusion with tile ``edges.''
We build up the relevant terminology as follows. All 
these definitions presume a fixed tiling.
\begin{itemize}
\item a {\em directed segment} is an ordered pair of points $PQ$ such that the line segment $PQ$ is a union of edges of tiles.  (Thus $PQ$ and $QP$ are different as directed segments.)
    \item an {\em internal segment} is a line segment connecting two vertices
of the tiling that is contained in the union of the boundaries of the 
tiles, and lies in the interior of $ABC$ except possibly for its endpoints.
\item A segment is {\em terminated}  at a vertex $P$ if it has
tiles on both sides with vertices at $P$.  (In that case there may or may not 
be a continuation of that segment past $P$.)  
\item A {\em left-terminated segment}  is an internal directed segment 
$XY$ that is terminated at $X$. A {\em right-terminated segment} is terminated at $Y$.
\item A tile is {\em supported by}
$XY$ if one edge of the tile lies on $XY$.
\item The internal directed segment $XY$ is 
said to have ``all $c$'s on the left'' if\footnote{Here the concept of ``left side'' of $XY$ is a different sense of ``left'' from ``left-terminated,'' because it means the left (counterclockwise) side of an observer standing at $X$ and looking in the direction of $Y$.} the endpoints $X$ and $Y$ are
vertices of tiles supported by $XY$ and lying on the left side of $XY$,
and all tiles supported by $XY$ lying on the left of $XY$ have their 
$c$ edges on $XY$. In this case we call the left side its \emph{$c$-side} and the right side its \emph{non-$c$-side}. Similarly for ``all $c$'s on the right.'' 
\item An {\em $c/a$  segment} is a left-terminated
 internal segment $PQ$ of the tiling
supporting two tiles on opposite sides of $PQ$,
each with a vertex at $P$, one with its $c$ edge on $PQ$ and 
one with its $a$ or $b$ edge on $PQ$.  The segment is said 
to ``emanate from $P$.''   Similarly for {\em $c/b$  segment} and {\em $a/b$  segment}.
\end{itemize}

Now, we can define Laczkovich's graphs.  A graph normally has ``nodes'' and ``edges'', but 
the word ``edge'' is already in use for tile edges, so we use the word ``link'' instead.
\begin{definition} \label{definition:c/a}
$\Gamma_a$ (and $\Gamma_b$ and $\Gamma_c$ analogously) is  the directed graph whose links are 
ordered pairs $\langle P,Q\rangle$, where $PQ$ is a 
segment $PQ$ that satisfies the following:
\begin{itemize}
    \item $PQ$ is left-terminated; 
    \item  $PQ$ has all $a$'s on one side;
    \item  $Q$ is not a vertex of a tile on the 
    non-$a$ side of $PQ$; 
    \item  there is a point $R$ such that the 
    straight line $PQR$ is part of the tiling,
    and $QR$ is a non-$a$ edge of a tile.
\end{itemize}
\end{definition}
We call $P$ the \emph{tail} and $Q$ the \emph{head} of the link $PQ$.
Sometimes we use the terminology ``$a$-link''  for a link of $\Gamma_a$.
That is, the next tile on the $a$-side of $PQ$ after all the $a$-edges
does not have its $a$-edge on $PQ$ (extended). Also, note that if $PQ$ is a link in $\Gamma_a$, its head $Q$ must be a $\pi$-vertex.

The reader who is not already familiar with these graphs is invited to fix the ideas by identifying $\Gamma_a$ in the tiling shown in Fig.~\ref{figure:1215}.

\subsection{Types of Vertices}

We classify the ``types'' of vertices that can occur in a tiling
according to the number of $\alpha,\beta$, and $\gamma$ angles occurring
at the vertex.  For example, a vertex of type $(1,1,1)$ has one each
of $\alpha$, $\beta$, and $\gamma$ angles meeting there; we call that 
a {\em simple} vertex. Note that in our definition, an $a$- (or $b$- or $c$-) link must end at a simple vertex.  Table~\ref{table:vertextypes} assigns some names to the different types
of vertices. 

\begin{table}[htp]
\caption{Vertex types.}
\label{table:vertextypes}
\centering
\begin{tabular}{ l  c c}
Name  & $(\alpha,\beta,\gamma)$ & Angle sum \\
\hline
simple    & $(1,1,1)$  & $\pi$\\
star      & $(3,3,0)$ & $\pi$\\
center    & $(0,0,3)$  & $2\pi$\\
double star  & $(6,6,0)$  & $2\pi$\\
$\gamma$-star     & $(4,4,1)$ & $2\pi$ \\
double simple & $(2,2,2)$& $2\pi$
\end{tabular}
\end{table}
Call a vertex a \emph{$\pi$-vertex} (respectively, \emph{$2\pi$-vertex}) if the sum of the angles of tiles at $X$ is $\pi$ (respectively, $2\pi$); any vertex must be either a vertex $A,B,C$ of the big triangle,  a $\pi$-, or a $2\pi$-vertex. 
Since $R$ has noncommensurable angles, we must have an equal number of $\alpha$'s and $\beta$'s at a $\pi$- or $2\pi$-vertex $X$, so the number of $\gamma$'s determine the other angles: 
\begin{itemize}
    \item $\pi$-vertex: If there is a $\gamma$ angle at $X$ then the other two are
an $\alpha$ and a $\beta$, and $X$ is a simple vertex.  If there is no 
$\gamma$ angle, then since $\pi = 3 \alpha + 3 \beta$, $X$ is a star.
\item $2\pi$-vertex:  If there are 
three $\gamma$ angles at $X$, then $X$ is a center.  If there 
are two $\gamma$ angles at $X$, then $X$ is a double simple, $(2,2,2)$.
If there is just one $\gamma$ angle at $X$, then $X$ is an $\gamma$-star  $(4,4,1)$. If there are no $\gamma$ angles, then we have a double star $(6,6,0)$.
\end{itemize}
This shows that every vertex (except the vertices of the big triangle) must be one of the types shown in Table~\ref{table:vertextypes}.

\section{Rationality of \texorpdfstring{$a/b$}{a/b}}
\label{sec:ab}

The proofs in this section are already contained in \cite{beeson-isosceles},
where they are stated only for isosceles (and not equilateral) $ABC$.  Here
we state the lemmas more generally and provide self-contained proofs, i.e., 
there are no references here to \cite{beeson-isosceles}.

\begin{lemma} \label{lemma:extendlink} 
Let a polygon be
tiled by tile $(\alpha,\beta,\gamma= 2\pi/3)$. Let $PQR$ be an (internal or boundary) segment where $Q$ is a $\pi$-vertex, such that $PQ$ is a $c$-edge of some tile and $QR$ is an $a$- or $b$-edge of some tile. Then there is a $c/a$ or $c/b$ segment emanating from $Q$.
\end{lemma}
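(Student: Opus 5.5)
The plan is a parity argument on the tiles that meet at $Q$, taken in angular order from the ray $QP$ to the ray $QR$.

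\emph{Setup.} Since $Q$ is a $\pi$-vertex on the straight segment $PQR$, the tile angles at $Q$ add up to $\pi$. So all tiles having a vertex at $Q$ lie on one side of the line $PQR$. Any tile on the other side contains $Q$ in the interior of one of its edges. Let $T_1$ be the tile whose $c$-edge lies on $PQ$ and $T_2$ the tile whose $a$- or $b$-edge lies on $QR$; both have a vertex at $Q$. List the tiles with a vertex at $Q$ in angular order, $T_1 = S_1, S_2, \dots, S_k = T_2$, sweeping from ray $QP$ to ray $QR$. Consecutive tiles $S_i, S_{i+1}$ share a ray $\rho_i$ emanating from $Q$, and each of them has an edge lying along $\rho_i$ with an endpoint at $Q$. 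By the classification of $\pi$-vertices in Section~\ref{sec:preliminaries}, $Q$ is either simple (angles $\alpha,\beta,\gamma$, so $k=3$) or a star (three $\alpha$'s and three $\beta$'s, so $k=6$).

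\emph{Labelling.} Give each of the two edges of $S_i$ at $Q$ the status ``$c$'' or ``non-$c$''. Since $c$ is opposite $\gamma$, the edges meeting at the angle $\alpha$ are $b$ and $c$, those at $\beta$ are $a$ and $c$, and those at $\gamma$ are $a$ and $b$. Hence the status flips across a tile whose angle at $Q$ is $\alpha$ or $\beta$. A tile whose angle at $Q$ is $\gamma$ has both edges non-$c$. The sequence of statuses starts with $c$ on ray $QP$ and ends with non-$c$ on ray $QR$.

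\emph{Parity.} Suppose, for contradiction, that on every shared ray $\rho_i$ the two edges have the same status. Then status changes only inside $\alpha$/$\beta$ tiles.
\begin{itemize}
\item Star: there are six flips. Starting from $c$ we end at $c$, contradicting the final non-$c$.
\item Simple vertex: there are two flips. The $\gamma$ tile needs non-$c$ on both of its edges, which forces the flips to pair up so that we again end at $c$, a contradiction. Alternatively, a $\gamma$ tile met in state $c$ is already a mismatch.
\end{itemize}
Hence some ray $\rho_i$ carries a $c$-edge of one of $S_i, S_{i+1}$ and an $a$- or $b$-edge of the other.

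\emph{Conclusion.} Take the maximal segment along $\rho_i$ starting at $Q$ that is a union of tile edges. It is internal: its points near $Q$ lie between two tiles, and it continues inside the polygon until it hits the boundary, possibly only at its far endpoint. It is terminated at $Q$, because tiles on both sides of it have a vertex at $Q$. So, directed away from $Q$, it is left-terminated. It supports, on opposite sides, $S_i$ and $S_{i+1}$, each with a vertex at $Q$, one with its $c$-edge on the segment and the other with its $a$- or $b$-edge on it. By definition this is a $c/a$ or $c/b$ segment emanating from $Q$.

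\emph{Main obstacle.} The one delicate point is checking that the segment satisfies the exact definitions of ``internal'' and ``left-terminated'', for instance that $\rho_i$ does not run along the boundary of the polygon. This follows because $\rho_i$ lies strictly between the rays $QP$ and $QR$, which bound the side of the line $PQR$ where the tiles at $Q$ sit, so $\rho_i$ enters the interior of the polygon. The parity count itself is routine.
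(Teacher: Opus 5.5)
Your proof is correct and follows essentially the paper's route: split into the simple and star cases and use parity. The paper counts directly that the $c$-edges at $Q$ other than the one on $PQ$ (one at a simple vertex, five at a star) lie on internal rays, so an odd count leaves one unpaired, and your status-flip sweep is the same parity count; your extra check that the resulting segment is internal and left-terminated supplies a detail the paper leaves implicit.
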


\begin{proof} 
Checking Table~\ref{table:vertextypes}, $Q$ must be simple or a star.

If $Q$ is simple, then one of the tiles has its $\gamma$ angle at $Q$, and hence no $c$ edge at $Q$. The other two have a  $c$ edge ending at $Q$.  One of those lies on $PQ$.  The
other does {\em not} lie on $QR$, by hypothesis.  Since there
is no other $c$ edge ending at $Q$, that third $c$ edge forms
either a $c/b$ segment or a $c/a$ segment.

If $Q$ is a star, then all six angles are $\alpha$ or
$\beta$.  Each of the six tiles has a $c$ edge ending at $Q$.
One lies on $PQ$ and five lie on internal segments.  Since five is odd,
one of those $c$ edges is not paired with another $c$ edge, and hence
constitutes a $c/a$ segment or a $c/b$ segment. 
\end{proof}

\begin{figure}[ht]
\centering
\includegraphics[width=0.85\linewidth]{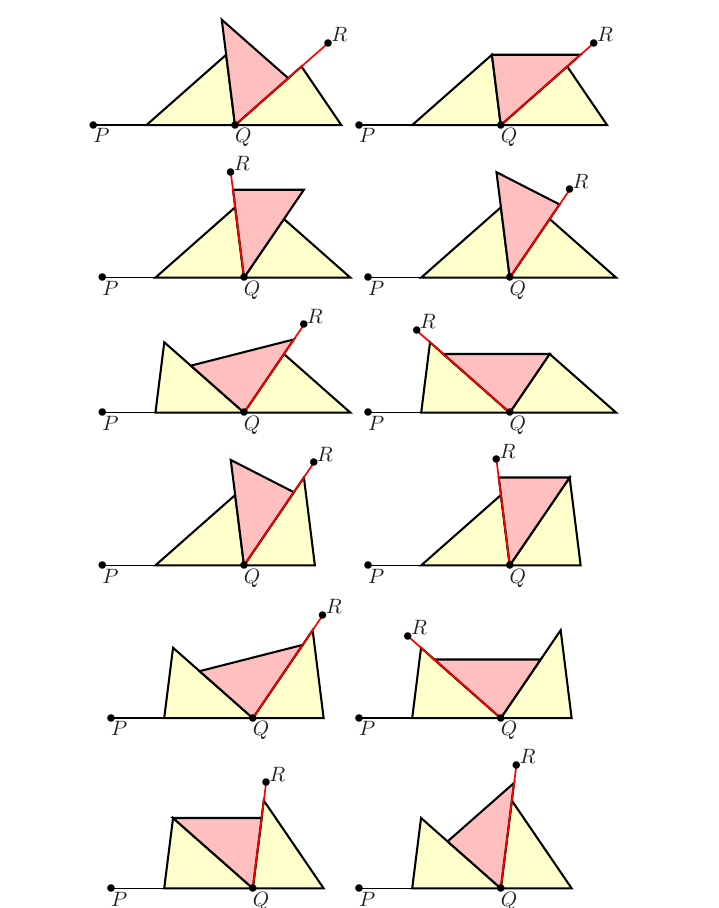}
\caption{A link $PQ$ in $\Gamma_b$ gives rise to another link through $QR$.}
\label{figure:GammaB}
\end{figure}

\begin{lemma} \label{lemma:extendlink-a} 
Let a polygon be
tiled by tile $(\alpha,\beta,\gamma= 2\pi/3)$. Let $PQR$ be an (internal or boundary) segment where $Q$ is a simple vertex, such that $PQ$ is an $a$-edge of some tile and $QR$ is an $b$- or $c$-edge of some tile. Then there is an $a/b$ or $a/c$ segment emanating from $Q$.
\end{lemma}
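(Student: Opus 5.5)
Since $Q$ is assumed simple, the vertex type is fixed: exactly three tiles meet at $Q$, one with its $\alpha$ angle there, one with its $\beta$ angle, and one with its $\gamma$ angle. This is the same situation as the simple-vertex case of Lemma~\ref{lemma:extendlink}, with the edge $a$ playing the role of $c$. The plan is to count the $a$-edges that end at $Q$ and show that one of them cannot be matched by another $a$-edge.

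First, list which edges of each tile end at $Q$. The two edges of a tile meeting at a vertex are the two sides adjacent to that vertex's angle, so they are the two sides other than the side opposite it.
\begin{itemize}
\item The tile with angle $\alpha$ at $Q$ contributes its $b$ and $c$ edges, and no $a$ edge.
\item The tile with angle $\beta$ at $Q$ contributes its $a$ and $c$ edges.
\item The tile with angle $\gamma$ at $Q$ contributes its $a$ and $b$ edges.
\end{itemize}
Hence exactly two $a$-edges end at $Q$.

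Next, use the hypothesis. Since $PQR$ is a straight segment, the three tiles at $Q$ sit on one side of it, filling an angle of $\pi$. The two extreme tiles each have an edge lying along $PQ$ and $QR$ respectively, and the middle tile has both of its edges at $Q$ on rays into the interior. By hypothesis one $a$-edge lies on $PQ$. The edge lying on $QR$ is a $b$- or $c$-edge, so the other $a$-edge does not lie on $QR$. It also does not lie on $PQ$, because the edge along $PQ$ at $Q$ belongs to a single extreme tile, which has already been accounted for. Therefore the second $a$-edge lies along an internal ray from $Q$.

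That ray is shared by two adjacent tiles at $Q$, one on each side, and each has an edge along it. One of these is the $a$-edge. The other cannot be an $a$-edge, since both $a$-edges at $Q$ are already accounted for. So the neighbouring edge is a $b$- or $c$-edge, and the segment along that ray is terminated at $Q$. It supports two tiles on opposite sides with vertices at $Q$, one with its $a$-edge and one with its $b$- or $c$-edge on it. This is exactly an $a/b$ or $a/c$ segment emanating from $Q$.

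The only care needed is the bookkeeping that the $a$-edge on $PQ$ and the internal one come from different tiles. In particular, if the tile on $PQ$ is the $\gamma$-tile or the $\beta$-tile, its other edge at $Q$ might itself be internal. This case is harmless: what matters is only that the other $a$-edge lies on neither $PQ$ nor $QR$, which the argument above establishes. If $PQR$ is a boundary segment, the argument is unchanged, since the internal ray lies in the interior.
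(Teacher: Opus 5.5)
Your proof is correct and is essentially the paper's own argument. The paper proves this lemma by rerunning the simple-vertex case of Lemma~\ref{lemma:extendlink} with $a$ in place of $c$: the $\alpha$-tile has no $a$-edge at $Q$, one of the two remaining $a$-edges lies on $PQ$, and the other does not lie on $QR$, so it is unpaired and yields the $a/b$ or $a/c$ segment. You spell out the arrangement of the three tiles at $Q$ more explicitly, which clarifies the argument without changing the route.
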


\noindent{\em Remark. The lemma does not seem to 
hold if $Q$ is not a simple vertex; see the Appendix.}

\begin{proof}
Identical to the simple vertex case of the proof of Lemma~\ref{lemma:extendlink}. 
Alternately, a visual proof is provided in 
Fig.~\ref{figure:GammaB}, which illustrates every case.
\end{proof}

\begin{definition} \label{definition:a-relation}
An { \bf $a$-relation} (resp. $b$-relation and $c$-relation) is a relation 
$$ja = pb + qc$$
with nonnegative integers $p,q,j$ and $j > 0$ (resp. $jb = pa + qc$ for $b$-relations and $jc = pa + qb$ for $c$-relations).
\end{definition}

\begin{lemma} \label{lemma:crelation}
Suppose a triangle $ABC$ is tiled by $(\alpha,\beta,\gamma=2\pi/3)$ with noncommensurable angles. Suppose $ABC$ is not similar
to the tile. Then either there is a $c$-relation, or $ABC$ satisfies the following conditions:
\smallskip

(i) Every tile supported by the boundary of $ABC$ has its $c$ edge on the boundary, and 

(ii) $ABC$ is either equilateral or has angles $(\pi/3, 2\alpha,2\beta)$.
\end{lemma}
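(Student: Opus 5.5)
We argue by contraposition: assuming there is no $c$-relation, we derive (i) and then (ii).

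\emph{Step 1 (boundary).} Suppose some tile supported by a side, say $AB$, has its $a$ or $b$ edge on $AB$. If no tile on $AB$ has its $c$ edge there, then the side length satisfies $|AB| = pa + qb$. We then compare it with the other sides; the hard case is postponed to Step~3. Otherwise, walking along $AB$ we find a boundary point $Q$ where a $c$ edge meets a non-$c$ edge. That point is a $\pi$-vertex, since it is not one of $A,B,C$. By Lemma~\ref{lemma:extendlink} a $c/a$ or $c/b$ segment emanates from $Q$ into the interior.

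We follow this segment. Along it, the $c$ edges on the $c$-side and the non-$c$ edges on the other side run in parallel. One of two things happens.

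\begin{itemize}
\item The two sides end together at some vertex. Then the summed lengths agree, which gives $jc = pa + qb$ with $j>0$, a $c$-relation.
\item The two sides mismatch at a $\pi$-vertex where a $c$ edge abuts a non-$c$ edge on the same line. Then Lemma~\ref{lemma:extendlink} applies again and produces a new $c/a$ or $c/b$ segment emanating from that point.
\end{itemize}

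This gives a path in a graph in the spirit of $\Gamma_c$. Since the tiling has finitely many vertices, the path must either terminate, giving a $c$-relation, or revisit a vertex. We must also exclude cycles. For this I would use an ordering argument, as in Laczkovich: choose the links so that a fixed linear functional, such as the distance along a chosen direction, strictly increases. Alternatively, note that each new link starts at the head of the previous one, which lies strictly past its tail. Either way the path terminates, and we obtain a $c$-relation. This forces (i).

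\emph{Step 2 (angles).} Assume (i). Every boundary tile has its $c$ edge on the boundary. At each corner of $ABC$ the tiles there therefore have their $\gamma$ angle away from the corner, so each corner angle is a sum of copies of $\alpha$ and $\beta$, say $m\alpha + n\beta$.

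Since $\alpha$ and $\beta$ are non-commensurable and $\alpha+\beta = \pi/3$, the angle sum $\pi$ of $ABC$ forces the total number of $\alpha$'s to equal the total number of $\beta$'s, with this common total equal to $3$. We enumerate the distributions of three $\alpha$'s and three $\beta$'s among three corners, each corner receiving a positive number of angles. The outcomes are:

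\begin{itemize}
\item $(\pi/3,\pi/3,\pi/3)$, the equilateral triangle;
\item $(2\alpha,2\beta,\pi/3)$;
\item cases such as $(\alpha,\beta,\ldots)$ that make $ABC$ contain an angle $\alpha$ and an angle $\beta$ and a third angle $\gamma$, i.e. $ABC$ similar to the tile, which is excluded;
\item other cases like $(3\alpha,\beta,2\beta)$ or $(\alpha,2\alpha,\ldots)$.
\end{itemize}

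The last group must be ruled out. I would try to do this using the boundary $c$ edges. Since the sides consist only of $c$ edges, the side lengths are integer multiples of $c$. The law of sines then gives rational sine ratios, and this should contradict the non-commensurability of $\alpha$. This is the part I am least sure of.

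\emph{Step 3 (main obstacle).} The delicate step is the path argument in Step~1. We must verify that at every mismatch point the vertex really is a $\pi$-vertex with the required configuration, so that Lemma~\ref{lemma:extendlink} applies. We must also show that the path cannot cycle or run into a $2\pi$-vertex without yielding a relation. I would handle $2\pi$-vertices by noting that the segment's terminating vertex sums the lengths on both sides equally, which again gives $jc = pa + qb$.
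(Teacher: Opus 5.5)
Your Step~1 is the paper's argument in path form: start at a boundary junction of a $c$ edge and a non-$c$ edge, apply Lemma~\ref{lemma:extendlink}, extend maximally, and repeat. However, three steps do not go through as written.

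\textbf{Excluding cycles.} Neither of your reasons works.
\begin{itemize}
\item The direction of each new $c/a$ or $c/b$ segment is dictated by the tiling, so no fixed linear functional need increase along the path.
\item ``Each link starts at the head of the previous one'' is true around any closed polygon, so it excludes nothing.
\end{itemize}
The missing idea is that, with no $c$-relation, at most one link can end at any vertex $Q$. The reasons are as follows.
\begin{itemize}
\item $Q$ lies inside a tile edge on the non-$c$ side, so the link's line is the only line of the tiling passing straight through $Q$.
\item On the $c$-side, the edge arriving at $Q$ is a $c$ edge and the one leaving is not, which fixes the direction of arrival.
\item Two tails $P\ne P'$ on that ray would make $PP'$ witness a $c$-relation.
\end{itemize}
Boundary vertices receive no links at all. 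Hence your path, started on $\partial(ABC)$, cannot repeat a vertex: the first repeated vertex would either be the boundary start or receive two different links. So the path stops, giving a $c$-relation. The paper phrases this as a degree count in $\Gamma_c$. In-degree is $\le 1$ everywhere and out-degree is $\ge 1$ at every head; since the totals are equal, out-degree is $0$ on the boundary.

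\textbf{A side with no $c$ edge.} You postpone this case but never treat it.
\begin{itemize}
\item No corner of $ABC$ carries a $\gamma$. Since $2\gamma>\pi$ there could be only one, and the rest of the angle sum, $\pi/3$, would then be one $\alpha$ and one $\beta$, making $ABC$ similar to the tile.
\item An interior point of a side carries at most one $\gamma$.
\item If none of the $n$ tiles on a side had its $c$ edge there, each would have its $\gamma$ at an endpoint of its edge on that side. With none at the corners and at most one at each of the $n-1$ interior vertices, this is impossible.
\end{itemize}
The first observation also fills a hole in your Step~2. There, (i) only tells you about tiles supported by a side, not about a tile touching the boundary only at a corner.

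\textbf{Other corner distributions.} The law of sines cannot exclude them. Take $\cos\beta=3/4$, so $\beta<\pi/3$ and $\beta/\pi$ is irrational by Niven's theorem. Then the triangle $(3\alpha,\beta,2\beta)$ has sides in the ratio $\sin 3\alpha:\sin\beta:\sin 2\beta=5:4:6$, all rational. The correct argument is combinatorial.
\begin{itemize}
\item By (i), both sides at a corner begin with $c$ edges of tiles at that corner.
\item A tile has only one $c$ edge, so each corner has at least two tiles.
\item There are exactly six angles in total (three $\alpha$, three $\beta$), so each corner has exactly two.
\end{itemize}
This leaves only $(\alpha+\beta,\alpha+\beta,\alpha+\beta)$ and $(2\alpha,2\beta,\alpha+\beta)$.
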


\noindent{\em Remark}.  It is not asserted that the relation
is witnessed in the tiling.  Of course, if the tile is rational
there is a $c$-relation,  so we may as well assume the 
tile is not rational.
\smallskip 

\begin{proof}  Since $\gamma > \pi/2$, there is at most
one $\gamma$ angle of a tile at a vertex of triangle $ABC$. 
If there is one $\gamma$ angle, then the other two angles are
$\alpha$ and $\beta$, so $ABC$ is similar to the tile, which
is assumed not to be the case.  Therefore  there are no $\gamma$
angles of tiles at any vertex of $ABC$.

Suppose, for proof by contradiction,
that there is no relation $jc = pa + qb$.
If $PQ$ is a $c$-link
in the graph $\Gamma_c$, it must end at a $\pi$-vertex, where the continuation of the segment is supported by a non-$c$-edge. Then, by Lemma~\ref{lemma:extendlink}, there is a $c/b$ or $c/a$
segment emanating from $Q$. Extend that segment to the 
longest segment $QR$ supporting only tiles with $c$ edges
on $QR$.  Since there is no relation $jc = pa + qb$,
$R$ cannot be the vertex of a tile on the other side 
of $QR$.  Therefore $QR$ is a link in $\Gamma_c$.

Therefore the out-degree of every node $Q$ in $\Gamma_c$ is 
at least one.  But the in-degree of $\Gamma_c$ is always
at most one.  Since the total out-degree is equal to the 
total in-degree, it follows that every node of $\Gamma_c$
has both in-degree and out-degree equal to 1.  Since
no link of $\Gamma_c$ can terminate on the boundary of $ABC$,
there can be no links of $\Gamma_c$ emanating from a vertex
on the boundary of $ABC$.

We claim that there is at least one 
$c$ edge on $AC$.  For if not, every tile supported 
by $AC$ has its $\gamma$ angle at a vertex on $AC$.
Since $\gamma > \pi/2$, there cannot be two $\gamma$
angles at any vertex internal to $AC$.  This means that we must have a $\gamma$ at $A$ or a $\gamma$ at $C$, or both. However, as noted in the first
paragraph of this proof, $\gamma$ angles do not occur at $A$ or $C$. Therefore, as claimed,
there is at least one $c$ edge on $AC$.  The same argument
applies equally to the other two sides of $ABC$,  as so far
the labels of the vertices are arbitrary. 

We now claim that every tile supported by the boundary is supported by their $c$-edge. If not, then somewhere on that boundary (without loss of generality, $AC$) there must be an $a$- or $b$-edge adjacent to a $c$-edge. By Lemma~\ref{lemma:extendlink}, there would be a $c$-link emanating from this vertex, which we have just established is not possible.

Thus, the boundary is a union of only $c$-edges. Consider any vertex of the big triangle. Since it is incident to two $c$-edges, it must be incident to at least two tiles, making at least $6$ tiles total across $A$, $B$, $C$. Since their angles sum to $\pi$ and we have non-commensurable angles, we are forced to have exactly $6$ tiles contributing three each of $\alpha$ and $\beta$.  So the angles of $ABC$ must be
$(2\alpha, 2\beta, \alpha+\beta)$ or $(\alpha + \beta, \alpha+\beta,\alpha+\beta)$.
\end{proof}
 
 \begin{lemma} \label{lemma:abrelation}
Suppose a triangle $ABC$
is $N$-tiled by $(\alpha,\beta,2\pi/3)$, with $\alpha$ not a 
rational multiple of $\pi$. Suppose $ABC$ is not similar to the tile. Then there exist both an $a$-relation and a $b$-relation.
\end{lemma}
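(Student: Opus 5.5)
We prove the existence of an $a$-relation by contradiction, running the argument of Lemma~\ref{lemma:crelation} with $\Gamma_a$ in place of $\Gamma_c$. The $b$-relation then follows by the same argument with the roles of $a$ and $b$ (and of $\alpha$ and $\beta$) interchanged. So suppose there is no relation $ja = pb + qc$ with $j>0$.

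\textbf{Step 1: every link of $\Gamma_a$ has an outgoing successor.} Let $PQ$ be a link of $\Gamma_a$. Its head $Q$ is a $\pi$-vertex, and the line $PQR$ continues past $Q$ along a non-$a$ edge. If $Q$ is simple, Lemma~\ref{lemma:extendlink-a} gives an $a/b$ or $a/c$ segment emanating from $Q$. Extend it maximally to a segment $QS$ supporting only $a$-edges on its $a$-side. The far endpoint $S$ cannot be a vertex of a tile on the non-$a$ side: otherwise the two sides of $QS$ would measure the same length, one as a multiple of $a$ and the other as a sum of $b$'s and $c$'s, which is an $a$-relation. Hence $QS$ is a link of $\Gamma_a$, and in particular it is not terminated on the boundary of $ABC$.

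\textbf{Step 2: the star case, which is the main obstacle.} The remark after Lemma~\ref{lemma:extendlink-a} warns that the extension lemma may fail at a star $(3,3,0)$. The parity argument of Lemma~\ref{lemma:extendlink} counted $c$-edges: at a star all six tiles have an $\alpha$ or a $\beta$ there, so each contributes a $c$-edge, giving five on internal segments. For $a$-edges the count is not forced. A tile with angle $\beta$ at $Q$ has edges $a$ and $c$ there, while one with $\alpha$ has edges $b$ and $c$. Thus exactly three $a$-edges end at $Q$, one of them lying on $PQ$. I would try to show directly that the other two cannot both be paired with $a$-edges on the opposite side. Each internal ray at $Q$ carries two tile edges, so the rays are paired, and the two remaining $a$-edges would have to lie on a common ray. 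I would do a small case analysis of the cyclic order of the six angles $\alpha,\beta$ at $Q$ to rule this out, or else to show that it forces an $a$-relation.

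If this analysis fails, the fallback is to avoid stars altogether: use the relation $3\alpha+3\beta=\pi$ together with the fact that a link's head lies on a line continuing with a non-$a$ edge, to argue that link heads at stars can be rerouted. As a last resort, I would bypass $\Gamma_a$ entirely, taking the $c$-relation from Lemma~\ref{lemma:crelation} and combining it with the law of cosines $c^2=a^2+ab+b^2$.

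\textbf{Step 3: counting and the boundary.} With out-degree at least one established at every node, and in-degree at most one as before, every node of $\Gamma_a$ has in- and out-degree exactly one, so no link touches the boundary. Now examine the boundary of $ABC$. There are no $\gamma$ angles at $A,B,C$, by the first paragraph of the proof of Lemma~\ref{lemma:crelation}. If an $a$-edge on a side were adjacent to a non-$a$ edge at a simple vertex, Lemma~\ref{lemma:extendlink-a} would produce a link from a boundary vertex, which is impossible. So along each side the $a$-edges and non-$a$ edges are separated only by star vertices or by the corners.

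Finally, compare lengths. We need an $a$-edge on the boundary; if none exists, we instead obtain a side of $ABC$ expressed in $b$'s and $c$'s, and we reach a contradiction via similar triangle side equations, using Lemma~\ref{lemma:crelation}(ii) in the exceptional shapes. If a side is entirely made of $a$-edges, then expressing the side lengths of $ABC$ through the law of sines, in terms of $\alpha$ and $\beta$, yields an $a$-relation or contradicts the noncommensurability of the angles. Either way the assumption that no $a$-relation exists is contradicted.
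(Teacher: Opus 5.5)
There is a genuine gap. Your Step~2 cannot be carried out, and this is not a technicality: it is exactly the flaw in Laczkovich's Lemma~7.1 discussed in the Appendix. Label the rays at the star $Q$ as $r_0=QP$, $r_1,\dots,r_5$, $r_6=QR$. Let the six tiles between consecutive rays have angles $\beta,\beta,\beta,\alpha,\alpha,\alpha$ at $Q$, oriented so that the internal rays $r_1,\dots,r_5$ carry $c/c$, $a/a$, $c/c$, $b/b$, $c/c$; the first tile then has its $a$-edge on $r_0$ and the last its $b$-edge on $r_6$. This is locally consistent. No $a/b$ or $a/c$ segment leaves $Q$, and nothing in the picture forces an $a$-relation. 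So a star can be a genuine sink of $\Gamma_a$, and ``in- and out-degree one at every node'' is unavailable.

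Your fallbacks do not help. ``Rerouting'' is not an argument. The law-of-cosines route fails on two counts. In the exceptional case of Lemma~\ref{lemma:crelation} there is no $c$-relation to start from. Even when there is one, $jc=pa+qb$ together with $c^2=a^2+ab+b^2$ only makes $a/b$ a root of a quadratic. For example, $3c=2a+4b$ gives $5t^2-7t-7=0$ for $t=a/b$, and then no $a$-relation exists.

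Step~3 would not finish the proof even if Step~2 worked. The $\Gamma_c$ argument of Lemma~\ref{lemma:crelation} ends in an exceptional configuration, not a contradiction, and the same obstruction appears here. If every boundary edge is a $c$-edge (precisely the shapes of Lemma~\ref{lemma:crelation}(ii)), no boundary vertex emits an $a$-link and nothing forces $\Gamma_a$ to be nonempty. Side lengths that are multiples of $c$ give no $a$-relation, and your appeals to side equations and the law of sines do not produce one.

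The missing idea is an interior supply of sources that provably outnumbers the possible sinks. The paper counts $\#(\alpha)+\#(\beta)-2\#(\gamma)$ vertex by vertex. The total is $N+N-2N=0$. Centers contribute $-6$, stars and $\gamma$-stars $6$, double stars $12$, simple and double simple vertices $0$, and $A,B,C$ together $6$ (there is no $\gamma$ at a corner). Hence $\mathcal{C}-\mathcal{S}=2\mathcal{S}_2+1>0$, where $\mathcal{C}$, $\mathcal{S}$, $\mathcal{S}_2$ count centers, stars plus $\gamma$-stars, and double stars respectively.

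At a center the three tiles put three $a$-edges and three $b$-edges on three rays. By parity some ray is an $a/b$ segment, which, absent an $a$-relation, extends to a link. A center is a $2\pi$-vertex, so its in-degree is $0$. Out-degree minus in-degree is therefore $\ge 1$ at centers and $\ge -1$ at stars. It is $\ge 0$ at all other vertices: by Lemma~\ref{lemma:extendlink-a} at simple vertices, and because in-degree is $0$ at $2\pi$-vertices and on the boundary. Summing gives $0\ge\mathcal{C}-\mathcal{S}>0$, a contradiction. This argument simply tolerates stars as sinks, which is exactly what your Step~2 tries, and must fail, to rule out.
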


\begin{proof}
It suffices to show the first relation $ja = pb + qc$; the existence of the second comes by symmetry, since the labels
``$a$'' and ``$b$'' can be interchanged.  Suppose, for sake of contradiction,
that there is no such relation as alleged in the statement of the lemma.
Let ${\mathcal S}$ be the number of stars and $\gamma$-stars, 
${\mathcal S_2}$ the number of double stars, and $\C$ the number of centers.  Now let us calculate the 
number of $\alpha$ angles, plus the number of $\beta$ angles,
minus twice the number of $\gamma$ angles. 
What do we get at the vertices $A,B,C$?  
There is no $\gamma$ angle at any vertex of $ABC$, since if there were, the 
other angles would have to be $\alpha$ and $\beta$, making $ABC$ similar to 
the tile, which is assumed not to be the case.  So $A$, $B$, and $C$ 
together contribute three each of $\alpha$ and $\beta$.
The contributions from vertices are tabulated
by vertex type in  Table~\ref{table:vertexcount}.

\begin{table}[htp]
\caption{$\#(\alpha) + \#(\beta)-2\#(\gamma)$ by vertex type}
\label{table:vertexcount}
\centering
\begin{tabular}{ l  c }
Vertex   & $\#(\alpha) + \#(\beta)-2\#(\gamma)$  \\
\hline
simple    & $0$ \\
center    & $-6$  \\
star      & $6$ \\
double star  & $12$ \\
$\gamma$-star     & $6$  \\
double simple & $0$ \\
$A$, $B$, and $C$   & $6$ altogether
\end{tabular}
\end{table}%

Adding up the contributions, we get 
 $6\,{\mathcal S} + 12\, {\mathcal S_2} -6\,\C + 6$.
  Since the 
total number of $\alpha$ is $N$, the total number of $\beta$ is 
$N$, and the total number of $\gamma$ is $N$, we get zero for 
the grand total.   Then ${\mathcal S} + 2\,{\mathcal S_2} = \C-1$. 
Therefore
\begin{eqnarray}
  \C - {\mathcal S}  =  2 \mathcal S_2 + 1 > 0 \label{eq:493}
\end{eqnarray}
Now we consider the graph $\Gamma_a$. Let $f(Q)$ be the out-degree of $Q$ (in $\Gamma_a$) minus the in-degree of $Q$. Also recall that we can have a link ending at $Q$ 
only if $Q$ is a $\pi$-vertex.
\begin{itemize}
    \item At a center $Q$ there are three tiles,
each with an $a$ edge and a $b$ edge at $Q$. Since $3$ is odd,
one of the $a$ edges shares a segment with one of the $b$ edges,
i.e., an $a/b$ edge emanates from $Q$. 
Let $R$ be the 
farthest point from $Q$ along that segment such that $QR$
supports only $a$ tiles on one side, say the ``left'' side.
  If $R$ were a
vertex of a tile on the 
other side, we would have a relation $ja = pb + qc$,
and $p$ would be positive since there is a $b$ edge on 
the ``right'' side of $QR$.  Since by hypothesis, there is 
no such relation, 
$R$ is not a vertex of a tile on the 
other side.  Then $QR$ is a link in $\Gamma_a$ and the outdegree of $Q$ equals $1$. On the the other hand, as a center is a $2\pi$-vertex, the in-degree is zero. Thus, $f(Q) = 1$.
\item  At a star $Q$ on an internal segment $PQ$, six tiles meet, providing six $c$ edges, three $a$
edges, and three $b$ edges.   There could be an incoming link 
at $Q$, if the tile on $PQ$ at $Q$ has its $a$ edge there, 
the tile past $Q$ does not have its $a$ edge on $PQ$ extended,
and the other two $a$ edges are not on the same segment. 
The in-degree of $\Gamma_a$ can never exceed 1, since it is impossible
for two lines of the tiling to cross at $Q$ when a link ends at $Q$.
There might be zero,  one, or more outgoing links from $Q$, as far as we know.%
\footnote{We would be done immediately if we could prove there is at least one outgoing $a$-link here, as assumed in \cite{laczkovich2012}. However, there is a gap in the proof; see Appendix.
}Thus, $f(Q) \geq -1$ for stars.
\item In the cases of $PQ$ when $Q$ is a double star, a $\gamma$-star, or a double simple vertex, $Q$ is a $2\pi$-vertex. Therefore, $PQ$ cannot be a link
of $\Gamma_a$ and the in-degree
is zero. Thus, $f(Q) \geq 0$.
\item At the vertices $A$, $B$, and $C$ the in-degree is zero, since a link
cannot terminate on the boundary. Therefore, $f(A), f(B), f(C) \geq 0$.
\item At simple vertices $Q$, if the in-degree is nonzero, then it must be $1$ (since there is only one direction the $a$-link can come in), but then  Lemma~\ref{lemma:extendlink-a} shows that the outdegree would be $1$.
\end{itemize}

These results are summarized in Table~\ref{table:vertexcount2}.
\begin{table}[ht]
\caption{Outdegree minus indegree by vertex type}
\label{table:vertexcount2}
\centering
\begin{tabular}{ l  c }
 Vertex type  &   Outdegree minus indegree  \\
\hline
simple    & $\ge 0$ \\
center    & $1$  \\
star      & $\ \ \ge -1$ \\
double star  & $\ge 0$ \\
$\gamma$-star     & $\ge 0$  \\
double simple & $\ge 0$ \\
$A$, $B$, and $C$   & $\ge 0$
\end{tabular}
\end{table}

We now sum $f(P)$ over all the vertices. We have
\begin{eqnarray*}
 \sum f(P) \geq {\mathcal C} - {\mathcal S}  && \mbox{\qquad from Table~\ref{table:vertexcount2}}   \\
 \sum f(P) > 0          && \mbox{\qquad by (\ref{eq:493}})
\end{eqnarray*}
On the other hand, $\sum f(P) = 0$ since every link contributes indegree 1 to the vertex at the 
head of the link, and outdegree 1 to the vertex at the tail of the 
link. This is a contradiction. 
\end{proof}

\begin{lemma}
\label{lemma:stubborn}
If a tiling has an $a$-relation and a $b$-relation, then $a/b$ is rational. 
\end{lemma}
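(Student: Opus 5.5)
The plan is to treat this as pure linear algebra on the two relations and ignore the geometry of the tiling. By hypothesis there are nonnegative integers $j,p,q$ and $j',p',q'$ with $j>0$, $j'>0$ such that
$$ ja = pb + qc, \qquad j'b = p'a + q'c. $$
Since $a,b,c$ are positive side lengths, we want to eliminate $c$ and obtain a nontrivial linear relation between $a$ and $b$ with integer coefficients.

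First, dispose of the degenerate cases. If $q=0$, the first relation reads $ja = pb$. Since $a>0$ and $j>0$, we must have $p>0$, so $a/b = p/j$ is rational. Symmetrically, if $q'=0$, then $j'b = p'a$ with $p'>0$, and $a/b = j'/p'$ is rational.

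Otherwise $q>0$ and $q'>0$. Multiply the first relation by $q'$, the second by $q$, and subtract, which eliminates $c$:
$$ (jq' + p'q)\,a = (j'q + pq')\,b. $$
Both coefficients are strictly positive integers: $jq'>0$ because $j,q'>0$, and $j'q>0$ because $j',q>0$. Hence
$$ a/b = (j'q+pq')/(jq'+p'q) $$
is a positive rational number.

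I do not expect any real obstacle here. The only points needing care are that the coefficients obtained after eliminating $c$ do not vanish, and that the degenerate cases $q=0$ or $q'=0$ are handled separately. The law of cosines $c^2=a^2+b^2+ab$ for $\gamma = 2\pi/3$ is not needed for this step; it will matter only later, when rationality of $c$ relative to $a$ and $b$ is addressed.
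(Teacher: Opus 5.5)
Your proof is correct and takes essentially the same route as the paper: dispose of the cases $q=0$ or $q'=0$, then eliminate $c$ to obtain $(jq'+p'q)\,a=(j'q+pq')\,b$. You are slightly more careful than the paper, since you check that $p>0$ (resp.\ $p'>0$) in the degenerate cases and that the final coefficients are positive, points the paper leaves implicit.
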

\begin{proof}

Assume, for proof by contradiction, 
that $a/b$ is not rational.  Suppose 
$ja = pb + qc$ and $Jb = Pa + Qc$, with nonnegative coefficients and $j, J > 0$.  If either $q = 0$ or $Q = 0$, then $a/b \in \mathbb{Q}$. So we may assume $q > 0$ and $Q > 0$. Therefore we have (solving for $c$ in both):
\begin{eqnarray*}
c = \frac{ja - pb}{q} &=& \frac{Jb-Pa}{Q}\\
(jQ+qP)a &=& (Jq+pQ)b \\
\frac a b &=& \frac {Jq+pQ}{jQ+qP}
\end{eqnarray*}
Therefore $a/b$ is rational.
\end{proof}

\begin{theorem} \label{theorem:rationaltile120}
Let $ABC$ be any triangle,  tiled by 
a tile $(\alpha,\beta, 2\pi/3)$ with non-commensurable angles. Suppose $ABC$ is not similar to the tile. Then the tile has commensurable sides, unless $ABC$ satisfies the following conditions:
\smallskip

(i) Every tile supported by the boundary of $ABC$ has its $c$ edge on the boundary, and 

(ii) $ABC$ is either equilateral or has angles $(\pi/3, 2\alpha,2\beta)$.
\end{theorem}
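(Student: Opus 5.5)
The plan is to assemble the three preceding lemmas; no new tiling combinatorics should be needed. Throughout we assume $ABC$ is not similar to the tile and that the tile has non-commensurable angles, so in particular $\alpha$ is not a rational multiple of $\pi$. These are exactly the hypotheses of Lemma~\ref{lemma:abrelation} and Lemma~\ref{lemma:crelation}.

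First, I would show that $a/b$ is rational. Lemma~\ref{lemma:abrelation} applies directly and gives both an $a$-relation $ja = pb + qc$ and a $b$-relation $Jb = Pa + Qc$, with nonnegative integer coefficients and $j, J > 0$. Lemma~\ref{lemma:stubborn} then turns this pair of relations into the rationality of $a/b$. This step is unconditional: it does not depend on whether $ABC$ falls into the exceptional configuration.

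Second, I would apply Lemma~\ref{lemma:crelation}, which splits the argument into two cases.
\begin{itemize}
\item In the exceptional case, $ABC$ satisfies conditions (i) and (ii). These are precisely the exceptions allowed in the statement, so there is nothing to prove.
\item Otherwise there is a $c$-relation $jc = pa + qb$ with $j > 0$. Dividing by $jb$ gives
\[
\frac{c}{b} = \frac{p}{j}\cdot\frac{a}{b} + \frac{q}{j}.
\]
The right-hand side is rational because $a/b$ is rational by the first step. Hence $c/b$ is rational, and so is $c/a = (c/b)/(a/b)$.
\end{itemize}
In the non-exceptional case all pairwise ratios of $a, b, c$ are therefore rational, which means the tile has commensurable sides.

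I do not expect a real obstacle, because the substantive work is already contained in the lemmas. The one point to check is that Lemma~\ref{lemma:abrelation} requires only non-similarity and $\alpha \notin \mathbb{Q}\pi$, and not the failure of (i)--(ii). If that is so, the rationality of $a/b$ is available in every case, and only the $c$-relation step has to be made conditional on avoiding the exceptional configuration.
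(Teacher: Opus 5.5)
Your proposal is correct and matches the paper's proof: Lemma~\ref{lemma:crelation} gives the dichotomy, Lemmas~\ref{lemma:abrelation} and~\ref{lemma:stubborn} give $a/b \in \mathbb{Q}$ unconditionally, and dividing the $c$-relation by $jb$ finishes the argument. Your coefficient $p/j$ is the right one; the paper's displayed ``$pj$'' is a typo. You are also right that $a/b \in \mathbb{Q}$ holds even in the exceptional configuration, and the paper relies on exactly this in Theorems~\ref{theorem:equilateralrational} and~\ref{theorem:lastcase}.
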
 

\begin{proof}  
Suppose $ABC$ is tiled as in the lemma.  
Unless the tiling satisfies the listed conditions, we must have a $c$-relation $jc = pa + qb$ by Lemma~\ref{lemma:crelation}. By Lemma~\ref{lemma:abrelation}, there is an $a$-relation 
and a $b$-relation.  By Lemma~\ref{lemma:stubborn}, $a/b$ is rational.  Dividing $jc = pa + qb$
by $bj$, we have $c/b = pj(a/b) + q/j$, which is rational, and $c/a = (c/b)(b/a)$, 
which is also rational. Therefore the tile has commensurable sides, as claimed.
\end{proof}

\section{Kites, parallelograms, and the boundary}
\label{sec:invariant}

\subsection{Kites and Parallelograms}

Call a tile \emph{$c$-internal} if its $c$-edge is internal to the tiling. We observe:
\begin{lemma}
\label{lemma:stubborn-matching}
    In a tiling of a convex polygon with noncommensurable sides and $a/b$ rational, there exists a perfect pairing between all the $c$-internal tiles such that each such pair of tiles:
    \begin{enumerate}
        \item have their $c$-edges parallel (in fact, on the same line),  and
        \item the tiles' interiors are on opposite sides with respect to their $c$-edges.
    \end{enumerate}  In other words, the two tiles form a kite or parallelogram up to translation.
\end{lemma}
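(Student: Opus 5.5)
The plan is to reduce the pairing to a counting statement on each line of the tiling, and then use that $c$ is linearly independent from $a$ and $b$ over $\mathbb{Q}$. The statement only asks that paired tiles have their $c$-edges on the same line and lie on opposite sides of it; the translation allowed in ``kite or parallelogram up to translation'' absorbs any offset along the line. So it suffices to show that on every internal line segment of the tiling, the number of $c$-edges on one side equals the number on the other. Once this holds, we pair the $c$-edges on the two sides of each such segment by an arbitrary bijection. Every $c$-internal tile has its $c$-edge on exactly one such segment, so this gives a perfect pairing of all $c$-internal tiles.

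\emph{Step 1 (setup).} Since $a/b$ is rational, write $a = m d$ and $b = n d$ with positive integers $m,n$ and a common unit $d>0$. The tile has noncommensurable sides and $a/b$ is rational, so $c/d$ must be irrational. Otherwise $c/a$ and $c/b$ would both be rational.

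\emph{Step 2 (maximal segments).} Let $L$ be a maximal internal segment: a maximal straight segment contained in the union of tile boundaries and lying in the interior of the polygon apart from its endpoints. Because the polygon is convex, the open segment $L$ lies in the interior, so every point of $L$ has tiles on both sides of it. I would check the following three claims.
\begin{enumerate}
\item The tiles on the left of $L$ that are supported by $L$ have edges whose union is exactly $L$.
\item No such edge protrudes beyond an endpoint of $L$; this follows from the maximality of $L$.
\item These edges overlap only at endpoints, since tiles do not overlap.
\end{enumerate}
The same holds on the right side. This verification, especially the claim that the supported edges on each side tile $L$ exactly, is the step I expect to need the most care. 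One has to rule out a tile edge on the line of $L$ extending past an endpoint of $L$, and maximality is what excludes this.

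\emph{Step 3 (length comparison).} Suppose the left side of $L$ carries $k$ $c$-edges, together with $a$- and $b$-edges of total length $t\,d$, where $t$ is a nonnegative integer. Suppose the right side carries $k'$ $c$-edges and $t'\,d$ of $a$- and $b$-edges. Computing the length of $L$ from both sides gives $kc + td = k'c + t'd$. Since $c/d$ is irrational, this forces $k = k'$ (and $t = t'$).

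\emph{Step 4 (pairing).} On each such segment, match the $k$ left $c$-edges with the $k$ right $c$-edges in any way. Each matched pair has collinear $c$-edges and tiles on opposite sides, which is conditions (1) and (2). After translating one tile along the line so that the two $c$-edges coincide, the pair forms a kite or a parallelogram. Which of the two arises depends on whether the $\alpha$-ends of the two $c$-edges sit at the same end or at opposite ends. Boundary $c$-edges are never involved, since those tiles are not $c$-internal by definition.
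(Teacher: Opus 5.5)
Your proposal is correct and takes the same approach as the paper. Take the maximal segment through each internal $c$-edge and compute its length from both sides. Since $c$ is irrational relative to the common unit of $a$ and $b$, the two sides must carry equal numbers of $c$-edges, which are then paired arbitrarily; your Step~2 just spells out the covering and maximality details the paper leaves implicit.
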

\begin{proof}
    Consider any $c$-internal tile. Its $c$-edge is part of some
    segment $PQ$ that is not part of a longer segment in the tiling.
    Because the polygon is convex, $PQ$ is an internal segment. $PQ$ induces some linear combination of $a$'s, $b$'s, and $c$'s, by counting the sides of the tiles supported on the two sides of $PQ$.

    We can scale so that $a$, $b$ are integers, though $c$ still must be irrational because the tile has noncommensurable sides. Because $a$ and $b$ are integral and $c$ is not, the number of $c$'s on the two sides must equal. We can arbitrarily pair them and they satisfy the desired conditions. Doing this for all $c$-internal tiles finishes the proof.
\end{proof}

\subsection{Definition of \texorpdfstring{$\zh$} {an invariant}}

We consider all our tilings to be composed of directed segments, where each tile is oriented in 
the counterclockwise direction, supplying a direction to each of its edges.
  Each directed segment belongs to one and only one tile, so the 
segments of the tiling are either boundary segments,  or are composed of different oriented segments
on different sides.  The same segment of the tiling might get different directions from the tiles
on its two sides, as will certainly  happen for the diagonals of a parallelogram or the axis of a kite.
If $PQ$ is a directed segment, we define the {\em angle of $PQ$} to be the directed angle from the positive x-axis to the vector PQ, measured counterclockwise.

\begin{lemma}
    \label{lemma:alignment} Given any tiling of a polygon by $(\alpha, \beta, \gamma=2\pi/3)$ with noncommensurable angles where at least one segment of the tiling coincides with the $x$-axis, every vector aligned with any segment of the tiling has the angle (with respect to the $x$-axis) that can be written uniquely as $(j(\pi/3) + k\alpha),$ where $j \in \{0, 1, \ldots, 5\}$, and $k \in \mathbb{Z}$.
\end{lemma}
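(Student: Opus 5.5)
Existence comes from propagating directions through the tiling; uniqueness comes from the noncommensurability of $\alpha$.

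\emph{Existence.} Let $\Lambda = \{\,j\pi/3 + k\alpha \bmod 2\pi : j\in\mathbb{Z},\ k\in\mathbb{Z}\,\}$. This is an additive subgroup of $\mathbb{R}/2\pi\mathbb{Z}$. Since $\beta = \pi/3 - \alpha$ and $\gamma = 2\pi/3$, it contains $\alpha$, $\beta$, $\gamma$ and $\pi$.

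Call a tile \emph{good} if the directions of all three of its edges lie in $\Lambda$. If one edge of a tile has direction in $\Lambda$, the other two edges have directions obtained by adding $\pi$ and/or the interior angles $\alpha,\beta,\gamma$ at the shared vertices. Hence all edges of that tile have directions in $\Lambda$.

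Now call two tiles adjacent if they share a segment of positive length. Their edges along that segment are collinear, so their directions agree modulo $\pi$. Therefore the neighbor of a good tile is also good.

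The tiled polygon is connected and the tiles cover it, so the adjacency graph of the tiles is connected. Tiles that meet only at a point can be linked through a chain of tiles around that vertex. By hypothesis, some tile has an edge on a segment lying on the $x$-axis, so that tile is good. By connectivity every tile is good. Every segment of the tiling is a union of tile edges, so every vector aligned with a segment has an angle in $\Lambda$.

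Reduce such an angle $j\pi/3 + k\alpha$ modulo $2\pi$. Since $2\pi = 6\cdot(\pi/3)$, this brings $j$ into $\{0,\dots,5\}$ without changing $k$.

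\emph{Uniqueness.} Suppose $j\pi/3 + k\alpha \equiv j'\pi/3 + k'\alpha \pmod{2\pi}$. Then $(k-k')\alpha$ is a rational multiple of $\pi$. Because the angles are noncommensurable, $\alpha/\pi$ is irrational, so $k = k'$. It follows that $(j-j')\pi/3 \in 2\pi\mathbb{Z}$, so $j \equiv j' \pmod 6$, and therefore $j = j'$ since both lie in $\{0,\dots,5\}$.

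\emph{Main obstacle.} The only point needing care is the connectivity claim: that the edge-adjacency graph of the tiles is connected, so that directions propagate across all tiles. A cleaner way to secure this is to propagate around vertices instead. At any vertex, consecutive edge directions around it differ by a tile angle ($\alpha$, $\beta$ or $\gamma$) or by $\pi$, both of which lie in $\Lambda$. Moving along edges from the $x$-axis segment, and using that the union of edges forms a connected planar graph, then reaches every edge.
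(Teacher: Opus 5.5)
Your proof is correct and takes essentially the paper's approach. Every direction is reached from the $x$-axis by rotations through $\alpha$, $\beta=\pi/3-\alpha$, $\gamma=2\pi/3$ and $\pi$, and uniqueness follows because $\alpha/\pi$ is irrational. Your subgroup-and-connectivity argument simply makes explicit the propagation step that the paper's one-line proof leaves implicit.
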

\begin{proof}
Any vector positively aligned with the $x$-axis corresponds to angle $0$. Every other angle can be obtained by rotating (including clockwise) some integer multiples of $\alpha$, $\beta = \pi/3 - \alpha$, or $\gamma = 2\pi/3$, along with reversing directions, which is the same as rotating by $\pi$.  Since $\alpha$ is not a rational multiple of $\pi$, the coefficient $k$ is uniquely determined, which then determines $j$ up to $\mathbb{Z}/6\mathbb{Z}$. 
\end{proof}

We now define the invariant $\zh$, introduced by Laczkovich on p.~351 of \cite{laczkovich2012}; see 
also the bottom of p.~365.  
Let $\A$ be the set of numbers of the form $j (\pi/3) + k\alpha$.
 Define $\zh : \A \to \R$ by 
$$ \zh (j (\pi/3) + k\alpha):= (-1)^j.$$
We use the same letter $\zh$ for a mapping from directed segments $PQ$ to $\A$ defined
by $\zh (PQ) = \vert PQ \vert \zh (\theta)$, where $\theta$ is the angle of $PQ$ as defined above.
If $PQR$ is a tile, with vertices listed in counterclockwise order, so that $PQ$, $QR$, and $RP$
can be considered as directed segments,  
we define 
$$ \zh (PQR) := \zh (PQ) + \zh (QR) + \zh (RP).$$
  Finally we define $\zh$ on a tiling 
(a union of tiles $T_i$) to be $\sum \zh (T_i)$.  

\begin{lemma} \label{lemma:fmultiplicative}
For $\theta$ and $\psi$ in $\A$, we have $\zh (\theta+\psi) = \zh (\theta) \zh (\psi).$
\end{lemma}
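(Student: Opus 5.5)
The plan is to reduce the statement to the uniqueness of the representation in Lemma~\ref{lemma:alignment}. By that lemma, every $\theta \in \A$ can be written uniquely as $j(\pi/3) + k\alpha$ with $j \in \{0,1,\ldots,5\}$ and $k \in \mathbb{Z}$. The key observation is that uniqueness holds because $\alpha$ is not a rational multiple of $\pi$. As a consequence, $j$ is well defined only modulo $6$, since angles are themselves taken modulo $2\pi$. For the same reason, $\zh(\theta) = (-1)^j$ is well defined: $(-1)^j$ depends only on $j \bmod 2$, and $6$ is even.

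Next, take $\theta = j(\pi/3) + k\alpha$ and $\psi = j'(\pi/3) + k'\alpha$. Their sum is $\theta + \psi = (j+j')(\pi/3) + (k+k')\alpha$. If $j + j' \geq 6$, reduce it modulo $6$ by subtracting $6 \cdot (\pi/3) = 2\pi$. This does not change the angle, and it shifts the index by an even number. By uniqueness, the representing index of $\theta+\psi$ is therefore congruent to $j+j'$ modulo $6$, and hence modulo $2$. It follows that
\[ \zh(\theta+\psi) = (-1)^{j+j'} = (-1)^j(-1)^{j'} = \zh(\theta)\zh(\psi). \]

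The only delicate point is the well-definedness of $\zh$. This requires that any two representations $j(\pi/3)+k\alpha \equiv j'(\pi/3)+k'\alpha \pmod{2\pi}$ force $k=k'$ and $j\equiv j' \pmod 6$. Here is the argument. Rearranging gives $(k-k')\alpha \in (\pi/3)\mathbb{Z}$. Since $\alpha$ is irrational relative to $\pi$, this forces $k=k'$. Then $(j-j')\pi/3 \in 2\pi\mathbb{Z}$, so $6 \mid j-j'$. I expect this step to be the main (though minor) obstacle; the rest is a direct computation.
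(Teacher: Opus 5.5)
Your proof is correct and is essentially the paper's argument: write $\theta=j(\pi/3)+k\alpha$ and $\psi=p(\pi/3)+q\alpha$, add them, and use $(-1)^{j+p}=(-1)^j(-1)^p$. Your extra check that $\zh$ is well defined is a useful addition. Irrationality of $\alpha/\pi$ forces the $k$'s to agree, and then $j$ is fixed modulo $6$, so its parity is fixed. This makes explicit what the paper leaves implicit when it evaluates $\zh$ at $(j+p)(\pi/3)+(k+q)\alpha$ with $j+p$ possibly outside $\{0,\ldots,5\}$.
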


\begin{proof}
Let $\theta = j(\pi/3) + k \alpha$ and $\psi = p(\pi/3) + q \alpha$.  Then
\begin{eqnarray*}
\zh (\theta+\psi) &=& \zh ((j+p) (\pi/3) + (k+q) \alpha) \\
               &=&  (-1)^{j+p}  \\
               &=&  (-1)^j (-1)^p \\
               &=&  \zh (\theta) \zh (\psi).
\end{eqnarray*}
\end{proof}

\begin{lemma} \label{lemma:boundary} Let $T$ be any tiling by a tile $(\alpha,\beta,2\pi/3)$ with non-commensurable angles.    Then $\zh (T)$ is equal to 
the sum of $\zh (PQ)$ over the directed boundary segments $PQ$ of $T$ in counterclockwise order.
\end{lemma}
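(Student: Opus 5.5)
The plan is to expand $\zh(T)=\sum_i \zh(T_i)$ as a sum of $\zh$ over all directed tile edges, and then show that the internal contributions cancel, leaving only the boundary. First, using Lemma~\ref{lemma:alignment}, every directed tile edge has an angle in $\A$ (we place a segment of the tiling on the $x$-axis; this is harmless since rotating everything by an element of $\A$ multiplies every $\zh$-value by the same sign $\pm 1$, by Lemma~\ref{lemma:fmultiplicative}). So $\zh$ of each directed edge is well defined.

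Next we observe that $\zh$ is additive along a line. Let $\ell$ be a maximal segment of the tiling, and give it a fixed direction with angle $\theta$. Every directed tile edge lying on $\ell$ has angle $\theta$ or $\theta+\pi$. By Lemma~\ref{lemma:fmultiplicative}, $\zh(\theta+\pi)=\zh(\theta)\zh(3\cdot\pi/3)=-\zh(\theta)$. Hence $\zh(PQ)=\zh(\theta)\,s(PQ)$, where $s(PQ)$ is the signed length of $PQ$ measured along the chosen direction. Signed length is additive, so for collinear directed segments $\zh$ is additive and satisfies $\zh(QP)=-\zh(PQ)$.

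Now group the directed tile edges by the maximal segment they lie on. Consider an internal maximal segment $\ell$, which has tiles on both sides. The tiles on the left of $\ell$, oriented counterclockwise, traverse their edges on $\ell$ in one direction. Together these edges cover $\ell$ exactly, so their signed lengths sum to $\pm|\ell|$. The tiles on the right traverse $\ell$ in the opposite direction and also cover it exactly, contributing $\mp|\ell|$. The net contribution of $\ell$ is therefore $0$. This holds even though the edges on the two sides need not match up, which is the situation for kites and parallelograms, and it is why we use signed lengths rather than pairing edges one by one.

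A boundary segment has tiles on one side only. The counterclockwise orientation of each of these tiles agrees with the counterclockwise orientation of the boundary of the tiled region. So their contributions add up to the $\zh$ of the boundary directed segments in counterclockwise order, again by additivity.

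The main obstacle is making the cancellation along internal segments rigorous when tile edges on the two sides end at different points (T-junctions). I will handle this by the covering argument above. The edges on each side of a maximal segment tile it as a 1-dimensional interval decomposition, so the sum of signed lengths on each side equals the total signed length of the segment. This is the reason for working with maximal segments rather than with individual edges.
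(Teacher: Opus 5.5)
Your proposal is correct and follows the paper's own proof. Both group the directed tile edges by the maximal line they lie on, use $\zh(\theta+\pi)=-\zh(\theta)$ to cancel the two sides of each internal line, and keep only the boundary terms; your signed-length covering argument spells out the T-junction case that the paper handles in one sentence. One small point, which the paper also glosses over: in a non-convex region such as the sawtooth-augmented tiling $T^\prime$ to which the lemma is later applied, a maximal line can be partly boundary and partly internal, so your dichotomy ``internal maximal segment'' versus ``boundary segment'' should be replaced by running your covering argument pointwise along each line, where the contribution per unit length is $\zh(\theta)\bigl([\text{tile on the left}]-[\text{tile on the right}]\bigr)$, which vanishes on internal stretches and gives exactly the counterclockwise boundary terms elsewhere.
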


\begin{proof}
$\zh (T)$ is defined as the sum of $\zh (PQ)$ over directed edges of the tiles in $T$.
Group the terms of this sum not by tile but by line of the tiling.  On each maximal line $L$ (i.e., $L$
terminates in points where it cannot be extended as a line of the tiling), the directed edges on one side 
are all oriented oppositely to the directed edges on the other side, and they all have the same angle,
since they lie on the same line.  We have $\zh (PQ) = -\zh (QP)$ by definition of $\zh$, since the angles of 
$PQ$ and $QP$ differ by $\pi$.
Hence the sum of $\zh (PQ)$ over all directed segments on both sides of $L$
is zero.  Since $\zh (T)$ is the sum over the interior segments plus the sum over the boundary segments,
and the sum over internal segments is zero, $\zh (T)$ is equal to the sum over the boundary segments.
\end{proof}  

\subsection{A worked example}
We intend to use $\zh(T)$ to show that certain 
tilings are impossible.  Those proofs will 
involve calculations of $\zh$ on hypothetical
tilings that do not actually exist--indeed their
non-existence is the point!  But to fix the 
ideas, in this section we will calculate
$\zh(T)$ on a specific tiling that does 
exist.  The reader who thinks the definition
of $\zh(T)$  does not need clarification by 
example may skip this section, as it plays
no role in our proof.

We calculate $\zh(T)$ for the smallest known tiling of an equilateral
triangle by a tile with a $2\pi/3$ angle and $\alpha/\pi$ irrational.  
This tiling was found in January 2024 by Bryce Herdt.   See Fig.~\ref{figure:1215}.

\begin{figure}[ht]
\centering
\includegraphics[width=0.85\linewidth]{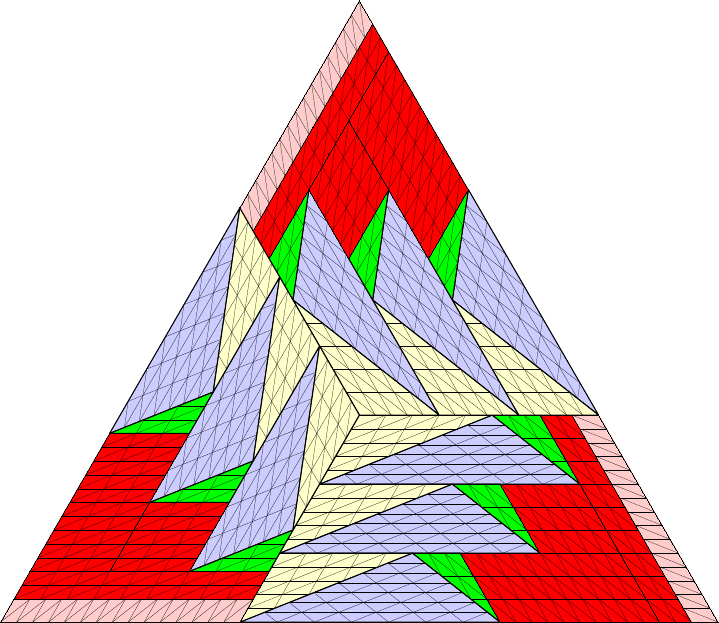}
\caption{$N=1215$.  The tile is $(3,5,7)$ and $\gamma = 2\pi/3$.}
\label{figure:1215}
\end{figure}

Since the tile has commensurable sides, Lemma~\ref{lemma:stubborn-matching} does not apply and we do not {\em a priori} get a matching of the $c$-internal segments, which is what makes this tiling possible. 
We want to demonstrate  by direct computation that in this example $\zh$ of the boundary is equal to $\zh$ of the tiling,
 Each side of the equilateral triangle is $27a + b + 7c$,
as you can count in the figure.  Since $(a,b,c) = (3,5,7)$,  that comes to $L = 135$.  Then 
twice the area is
$L^2 \sin (\pi/3) = 135^2 \sqrt 3 /2.$ 
Twice the area of each tile is $ab \sin(2\pi/3) = 3 \cdot 5 \sqrt{3}/2$.
With $N = 1215$ we should have $ 1215 \cdot 3 \cdot 5 = 135^2$.  Indeed, both sides are 18225.  

Then according to Lemma~\ref{lemma:boundary}, 
$\zh(T)$ is given by
\begin{eqnarray}
\zh(\partial ABC) = 135 (1 - (-1)^1 +  (-1)^2) =3\cdot 135 \label{eq:968}
\end{eqnarray}
We want to calculate $\zh(T)$ directly from the definition
and verify that we get this answer.

The red and pink parallelograms contribute zero.
The rest of $T$ is composed of nine trapezoids $T_1,\ldots, T_9$, each containing
yellow, light blue, and green tiles.   Let $T_1$ be the one 
at the bottom of the figure.    
Consider a single trapezoid $T_1$ with the longest side aligned to the $x$-axis, such as the one at the bottom of the tiling. The angles of the sides, starting with the bottom, are $(0, 2\pi/3, \pi, 4\pi/3)$,
corresponding to factors of $(-1)^0, (-1)^2, (-1)^3, (-1)^4)$.  
We can compute by counting tiles in the figure that 
$$  \zh(\partial T_1) = 49(-1)^0 + 15(-1)^2 + 34(-1)^3 + 15 (-1)^4 = 49+15-34+15 = 45$$
If we accept for the moment that $\zh(T_1) = \zh(\partial T_1)$,
then summing the nine copies (with triplets of the copies rotated by $2\pi/3)$ gives 
$$45 \cdot 9 = 3 \cdot 135$$ as desired.

A {\em quadratic tiling} is a tiling using $n^2$ tiles
similar to the tiled triangle, formed by lines parallel
to the sides of the tiled triangle.  For example,
the nine light blue triangles in Fig.~\ref{figure:1215}. That Lemma~\ref{lemma:boundary} works for a quadratic tiling 
can be seen visually.  For example,
the non-parallelogram part of the blue quadratic
tiling $Q$ is the seven tiles supported by the lower boundary $AB$ of $ABC$.
From the translation-invariance of $\zh$, one sees that
$\zh$ of  those tiles is equal to
$$\zh(Q) =  7c + 7a(-1)^2  - 7b(-1),$$
which is $\zh(\partial Q)$ (and incidentally comes to $7c + 7a + 7b$).
Similarly Lemma~\ref{lemma:boundary} works for the green and yellow 
quadratic tilings.  Since the touching edges of the green and blue 
triangles are oppositely oriented, their
 terms  in $\zh(\partial Q)$ will cancel.  Similarly
 for the touching edges of yellow and blue.
 Then only the 
terms for the boundary of the trapezoid $T_1$ are left.
That completes the exercise.

\section {Applications of the invariant}
\label{sec:proof-main}
In this section we apply the invariant $\zh$
to prove the non-existence of certain tilings,
because if they existed,  the sum of $\zh(P)$
over tiles $P$ would not equal the value of 
$\zh$ on the boundary.

\begin{lemma} \label{lemma:kites}
Let $T$ be any tiling using the tile $(\alpha,\beta,2\pi/3)$ and non-commensurable angles. 
Then parallelograms and kites contribute $0$ to $\zh (T)$.
\end{lemma}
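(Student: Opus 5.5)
Here "parallelogram" and "kite" mean pairs of congruent tiles sharing their $c$ edges with opposite orientation, as produced by Lemma~\ref{lemma:stubborn-matching}. The cleanest plan is to show that such a pair $P\cup P'$ satisfies $\zh(P)+\zh(P')=0$. I will use the fact that $\zh$ of a directed segment depends only on its length and on $\zh$ of its angle. I will also use the multiplicativity of Lemma~\ref{lemma:fmultiplicative}, which holds for all angles in $\A$ by Lemma~\ref{lemma:alignment}.

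\textbf{Parallelogram case.} Here $P'$ is obtained from $P$ by a rotation by $\pi$ (the point reflection through the midpoint of the common $c$ edge). Each directed edge of $P'$, oriented counterclockwise, is the image of a counterclockwise directed edge of $P$ under this rotation, so its angle differs by $\pi$. Since $\zh(\pi)=(-1)^3=-1$, Lemma~\ref{lemma:fmultiplicative} shows that each term of $\zh(P')$ is the negative of the corresponding term of $\zh(P)$. Hence $\zh(P)+\zh(P')=0$.

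\textbf{Kite case.} Here $P'$ is the mirror image of $P$ across the line $L$ of the common $c$ edge. Let $\theta_0$ be the angle of $L$. A reflection sends a direction $\theta$ to $2\theta_0-\theta$, and it reverses orientation, so counterclockwise tiles map to clockwise ones. The counterclockwise directed edges of $P'$ are therefore the reflections of the directed edges of $P$ with their directions reversed, giving angle $2\theta_0-\theta+\pi$. By multiplicativity,
\[
\zh(2\theta_0-\theta+\pi)=\zh(\theta_0)^2\,\zh(\theta)^{-1}\cdot(-1)=-\zh(\theta),
\]
since $\zh$ takes values $\pm1$. Each corresponding edge has the same length, so again the contributions cancel.

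One technical point needs checking: $-\theta$, and hence $2\theta_0-\theta$, must lie in $\A$, and $\zh(-\theta)=\zh(\theta)^{-1}$. This follows because $\A$ is a group, $\zh$ is a homomorphism to $\{\pm1\}$, and $\zh(0)=1$.

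The main obstacle is the bookkeeping of orientations in the kite case. I would verify it directly on the $c$ edge: in $P$ it has angle $\theta_0$, and in $P'$ it is traversed oppositely, with angle $\theta_0+\pi$, giving opposite signs as the formula predicts. The remaining edges follow by the same formula, and summing over all pairs gives contribution $0$ to $\zh(T)$.
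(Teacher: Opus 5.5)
Your proof is correct, but it takes a different route from the paper's.

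The paper works with the quadrilateral. It writes down the four boundary angles explicitly: $(\theta,\theta+\pi/3,\theta-\pi,\theta-2\pi/3)$ for the parallelogram and $(\theta,\theta+\pi/3,\theta+2\pi/3+2\alpha,\theta+\pi+2\alpha)$ for the kite. It then factors out $\zh(\theta)$ by multiplicativity and checks that the signed sums $b-a-b+a$ and $b-a+a-b$ vanish. The shared $c$ edge is tacitly omitted because its two traversals cancel.

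You instead compare the two tiles term by term through the isometry relating them. A half-turn shifts every directed angle by $\pi$. A reflection in a line of angle $\theta_0\in\A$, combined with the orientation reversal, sends $\theta$ to $2\theta_0-\theta+\pi$. In both cases $\zh$ changes sign.

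Your argument needs no angle bookkeeping and no knowledge of which side has length $a$ or $b$. It treats the shared $c$ edge like the other edges. It also proves something more general: $\zh(P')=-\zh(P)$ whenever $P'$ is obtained from $P$ by a point reflection, or by a reflection in a line whose angle lies in $\A$. The paper's computation is more concrete and shows exactly how the $a$ and $b$ terms pair off.

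One point you should make explicit. Lemma~\ref{lemma:stubborn-matching} only guarantees that the two $c$ edges lie on the same line, not on the same segment. So $P'$ is the rotated or reflected image of $P$ only up to a translation along that line. This is harmless, since $\zh$ of a directed segment depends only on its length and angle; the paper makes the same point in the remark following the lemma.
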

\noindent{\em Remark}.
Because $\zh(T)$ is translation-invariant, this lemma applies also to
``virtual'' kites and triangles produced by different $c$ edges on opposite sides
of the same line as in Lemma~\ref{lemma:stubborn-matching}.
\begin{proof}
First, let us compute the angles of parallelograms and kites:
\begin{itemize}
    \item Let $PQRS$ be a counterclockwise parallelogram
with $PQ$ having angle $\theta$ and angle $PQR = 2\pi/3$.  The angles of the four segments are
\begin{eqnarray}
(\theta, \theta+ \pi/3, \theta -\pi, \theta -2\pi/3) \label{eq:3317}   
\end{eqnarray}
\item Let $PQRS$ be a counterclockwise kite with $PQ$ have angle $\theta$
and angle $PQR = 2\pi/3$ and angle $SPQ = 2\alpha$.  The angles of the four segments are
\begin{eqnarray}(\theta, \theta + \pi/3, \theta + \pi/3 +(\pi- 2 \beta), \theta + 2\alpha + \pi)  \nonumber\\
= (\theta, \theta + \pi/3, \theta + 2\pi/3 + 2 \alpha, \theta + \pi + 2 \alpha ).  \label{eq:3324}
\end{eqnarray}
\end{itemize}

The opposite sides of a parallelogram have opposite angles. Their contributions to $\zh$ 
thus differ by $(-1)^3 = -1$.  More formally, if $PQRS$ is a parallelogram, the angles are given by (\ref{eq:3317}); by Lemma~\ref{lemma:fmultiplicative} we can factor out $\zh (\theta)$.   Assume $\vert PQ \vert = b$.
Then 
$$ \zh (PQRS) = \zh (\theta)(b + a(-1) + b(-1)^{-3} + a(-1)^{-2}) = b-a-b+a = 0,$$
as claimed.
    
    For a kite: without loss of generality, a kite has counterclockwise vertices $PQRS$ with $SPQ = 2\alpha$ and $QRS = 2\beta$. Let the angle of $PQ$ be $\theta$, so $\zh (PQ) = b\zh (\theta)$ and the other angles are
given in counterclockwise order by (\ref{eq:3324}), namely 
\begin{eqnarray*}
= (\theta, \theta + \pi/3, \theta + 2\pi/3 + 2 \alpha, \theta + \pi + 2 \alpha ).  
\end{eqnarray*}
Therefore
$$\zh (PQRS) = \zh (\theta)( b + a(-1) + a(-1)^2+ b(-1)^3) = 0,$$
as claimed.
\end{proof}

\begin{theorem}
\label{theorem:equilateralrational}
Suppose the equilateral triangle $ABC$ is tiled by $(\alpha, \beta, 2\pi/3)$ with noncommensurable angles. Then the tile has commensurable sides.
\end{theorem}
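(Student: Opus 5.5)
The plan is to reduce to the exceptional configuration of Theorem~\ref{theorem:rationaltile120} and then rule it out by computing $\zh(T)$ in two ways. Suppose, for contradiction, that the tile has non-commensurable sides. An equilateral triangle is never similar to a tile with a $2\pi/3$ angle, so Lemma~\ref{lemma:abrelation} gives an $a$-relation and a $b$-relation, and Lemma~\ref{lemma:stubborn} then gives $a/b\in\mathbb{Q}$. Theorem~\ref{theorem:rationaltile120} lets us assume in addition that every tile supported by the boundary has its $c$ edge on the boundary; otherwise a $c$-relation exists and we are already done. I scale so that $a,b$ are integers. Then $c\notin\mathbb{Q}$, and since every side is a sum of $c$ edges, each side has length $nc$ for the same integer $n$.

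\textbf{Step 1: discard the interior.} Every tile is either $c$-internal or has its $c$ edge on $\partial ABC$. By Lemma~\ref{lemma:stubborn-matching} the $c$-internal tiles pair up into (translated, possibly virtual) kites and parallelograms. By Lemma~\ref{lemma:kites} and the remark after it, these pairs contribute $0$ to $\zh(T)$. Hence $\zh(T)$ equals the sum of $\zh$ over the $3n$ boundary tiles.

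\textbf{Step 2: evaluate both sides.} Put $AB$ on the $x$-axis. The sides then have angles $0,2\pi/3,4\pi/3$, so Lemma~\ref{lemma:boundary} gives $\zh(T)=3nc$.

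For a boundary tile $PQR$ whose $c$ edge $PQ$ lies on a side of angle $\theta$, the other two directed edges have angles $\theta+2\pi/3+\alpha$ and $\theta+\pi+\alpha$ (or the analogous pair with the roles of $a$ and $b$ swapped). By Lemma~\ref{lemma:fmultiplicative}, the tile contributes $c\pm(a-b)$. Summing over all boundary tiles gives $3nc+m(a-b)$ for some integer $m$, and comparing with the boundary value forces $m(a-b)=0$.

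\textbf{Step 3: the main obstacle.} The relation $m(a-b)=0$ involves no $c$, so this $\zh$ alone does not produce a contradiction. I expect the real work to be here.

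My first attempt is to vary the character. Any $\chi(j\pi/3+k\alpha)=(-1)^j t^k$ is multiplicative and satisfies $\chi(\theta+\pi)=-\chi(\theta)$, so the cancellation argument of Lemma~\ref{lemma:boundary} still applies. The difficulty is that for $t\neq\pm1$ kites no longer vanish: a kite contributes $(b-a)(1-t^2)\chi(\theta)$. So I would try to pair those kite contributions against the boundary-tile terms, which carry the factors $t^{\pm1}$. The aim is to get an identity in $t$ in which $c$ appears only on the boundary side and cannot be balanced, since $c$ is irrational while every other coefficient lies in $\mathbb{Q}$.

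As a fallback I would bring in the area identity $Nab=n^2c^2$, which makes $c^2$ rational. Combined with a parity or orientation count of the boundary tiles coming from Step 2, I would try to force $c\in\mathbb{Q}$, contradicting the assumption of non-commensurable sides.
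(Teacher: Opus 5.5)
There is a genuine gap, and it comes from a miscomputation in Step 2 rather than from any real obstacle. Your reduction and Step 1 are correct and agree with the paper. The error is the claim that a boundary tile contributes $c\pm(a-b)$ with the sign depending on its orientation; that is false.

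Let the $c$ edge be $PQ$ with angle $\theta$ and apex $R$. If $\beta$ is at $Q$, then $QR$ has length $a$ and angle $\theta+2\pi/3+\alpha$, and $RP$ has length $b$ and angle $\theta+\pi+\alpha$. This gives $\zh(\theta)(c+a-b)$, as you say. If $\alpha$ is at $Q$, the two edges are \emph{not} at the same angles with $a$ and $b$ exchanged. Instead, $QR$ has length $b$ and angle $\theta+\pi-\alpha$ (factor $-1$), and $RP$ has length $a$ and angle $\theta+4\pi/3-\alpha$ (factor $+1$). So the contribution is again $\zh(\theta)(c+a-b)$.

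The reason is that $\beta=\pi/3-\alpha$ shifts $j$ by one. Swapping $\alpha$ and $\beta$ therefore flips the parity, which exactly compensates the swap of $a$ and $b$; the same mechanism makes kites vanish in Lemma~\ref{lemma:kites}. Since $\zh(\theta)=1$ on all three sides, your integer $m$ equals $3n$, not an unknown signed count. Comparing with $\zh(\partial T)=3nc$ gives $3n(a-b)=0$, so $a=b$. Because $a/b=\sin\alpha/\sin\beta$ and $\alpha+\beta=\pi/3$, this forces $\alpha=\beta=\pi/6$, contradicting noncommensurable angles.

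With this correction your argument is exactly the paper's proof. The paper gets the orientation-independence from ``sawtooth'' tiles. Outside each boundary tile it glues a tile of one fixed orientation, which forms a kite or a parallelogram with that boundary tile, and both have $\zh=0$. So every boundary tile has the same $\zh$, and $\zh(\partial T')=3X(b-a)\neq 0$ contradicts $\zh(T')=0$.

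As written, however, your Step 3 abandons the working argument for ideas that are never carried out. The $t$-deformed characters reintroduce nonzero kite terms that you do not control. The area identity adds nothing, since $c^2=a^2+ab+b^2$ is already rational by the law of cosines. So the proposal does not prove the theorem until Step 2 is corrected.
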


\begin{proof}
Suppose $ABC$ is tiled as in the lemma.  Suppose, for proof by contradiction,
that the tiling has non-commensurable sides.  Let $(a,b,c)$ be the sides of the tile opposite
to $(\alpha,\beta,\gamma)$ respectively. By Lemma~\ref{lemma:stubborn}, we can assume $a$, $b$ are integers and $c$ is irrational.  By Theorem~\ref{theorem:rationaltile120},
each side of $ABC$ supports only tiles with the $c$ edge on the boundary of $ABC$.
We align so that the bottom edge of the equilateral triangle is the $x$-axis, which we assume is $AB$,
with vertex $C$ in the upper half plane so $ABC$ is counterclockwise order.

Now, we use a trick to write a slightly augmented version of $\zh(T)$ as a sum of contributions only from kites and parallelograms. Around the outside of triangle 
$ABC$, we add new tiles, all oriented the same way, say with sides in the order $abc$ as we 
traverse $ABC$ counterclockwise.  We call these the ``sawtooth'' tiles.  See Fig.~\ref{figure:sawtooth}. 

\begin{figure}[ht]
\centering
\includegraphics[width=0.85\linewidth]{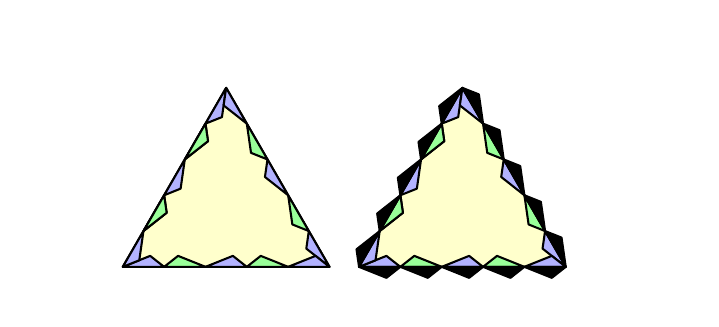}
\caption{Before and after adding sawtooth tiles}
\label{figure:sawtooth}
\end{figure}
Each sawtooth tile matches with 
a boundary tile to make either a kite or a parallelogram, depending on the orientation of that boundary tile. By Lemma~\ref{lemma:stubborn-matching}, all the tiles not supported by the boundary, being $c$-internal, are paired into kites and parallelograms, and we have just paired up the boundary-supported tiles (which are all not $c$-internal) into kites and parallelograms as well. Let $T^\prime$ be the tiling of $ABC$ with the sawtooth tiles added.  Then
$\zh(\partial T^\prime)$  is the sum of contributions from kites and parallegrams, which are zero.   
Therefore
\begin{eqnarray}
\zh(\partial T^\prime) = 0   \label{eq:794}
\end{eqnarray}

Let $PQR$ be a sawtooth tile supported by $AB$,  in which $PQ$ has length $a$
and angle $$-\beta = 2\pi-\beta = 2\pi  - (\pi/3-\alpha) = 5\pi/3 + \alpha,$$ 
and $RP$ has length $b$ and angle $5(\pi/3) + \alpha + \pi/3 = \alpha$.  Then
\begin{eqnarray*}
\zh(QR) &=& (-1)^5 a = -a \\
\zh(RP) &=& b \\
\zh (QR) + \zh (RP) &=&  b-a 
\end{eqnarray*}
Note that we do not include $PQ$, which is now part of a kite or parallelogram, and does not lie
on the boundary of $T^\prime$. 
We have $b \neq a$, since $b/a = \sin \beta / \sin \alpha$, and $\alpha + \beta = \pi/3$, so if $a=b$
we would have $\alpha = \beta$, contradicting the hypothesis of incommensurable angles. 

Since there are only $c$-edges on the boundary, the length of each side is $Xc$, where $X$ is the 
number of tiles in $T$ supported by each side of the boundary, which also equals the number of sawtooth tiles on each side.
The sawtooth tiles on the other two sides of $ABC$ have angles like $PQR$, but rotated by $2\pi/3$ and $4\pi/3$.
More precisely, the boundary segments of those sawtooth tiles have angles rotated from the boundary segments
of sawtooth tiles on $AB$.
So they give the same contributions, multiplied by $(-1)^2$ and $(-1)^4=1$ respectively. That is,
\begin{eqnarray*}
\zh(\partial T^\prime) = 3X(b-a)
\end{eqnarray*}
But that contradicts (\ref{eq:794}), since $b \neq a$.
\end{proof} 

\begin{theorem}
\label{theorem:lastcase}
Suppose the triangle $ABC$ with angles $(2\alpha, \pi/3, 2\beta)$ is tiled by  $(\alpha, \beta, 2\pi/3)$ with noncommensurable angles. Then the tile has commensurable sides. 
\end{theorem}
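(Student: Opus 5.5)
The plan is to repeat the sawtooth argument from the proof of Theorem~\ref{theorem:equilateralrational} almost verbatim. The only real change is in the final bookkeeping: the three sides of $ABC$ now have different lengths, and their directions are not all of the form $j\pi/3$.

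Suppose, for contradiction, that the tile has non-commensurable sides. By Theorem~\ref{theorem:rationaltile120} we have $a/b$ rational, so we may take $a,b$ to be integers and $c$ irrational. Theorem~\ref{theorem:rationaltile120} also gives that every tile supported by the boundary of $ABC$ has its $c$ edge on the boundary. Label the vertices so that the angles at $A,B,C$ are $2\alpha,\pi/3,2\beta$, and place $AB$ on the $x$-axis with $ABC$ counterclockwise. Let $X_1,X_2,X_3$ be the numbers of boundary tiles on $AB$, $BC$, $CA$. Then the side lengths are $X_1c$, $X_2c$, $X_3c$.

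Add sawtooth tiles along the boundary exactly as before. Each sawtooth tile pairs with the boundary tile it touches to form a kite or a parallelogram. By Lemma~\ref{lemma:stubborn-matching}, all the remaining tiles, which are $c$-internal, also pair into (possibly virtual) kites and parallelograms. Lemmas~\ref{lemma:kites} and~\ref{lemma:boundary} then give $\zh(\partial T')=0$.

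Next compute $\zh(\partial T')$ directly. The directions of the sides are as follows:
\begin{itemize}
\item $AB$ has angle $0$;
\item $BC$ has angle $\pi-\pi/3=2\pi/3$;
\item $CA$ has angle $\pi+2\alpha$, since $AC$ leaves $A$ at angle $2\alpha$.
\end{itemize}
All three lie in $\A$. The sawtooth tiles on a side of angle $\theta$ are the sawtooth tiles on $AB$ rotated by $\theta$. So by Lemma~\ref{lemma:fmultiplicative}, each of them contributes $\zh(\theta)(b-a)$, where $b-a$ is the value computed in the equilateral proof. Since $\zh(0)=1$, $\zh(2\pi/3)=(-1)^2=1$ and $\zh(\pi+2\alpha)=(-1)^3=-1$, we get
$$\zh(\partial T')=(b-a)(X_1+X_2-X_3).$$
We already know $b\ne a$. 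The main obstacle is therefore to show $X_1+X_2\ne X_3$.

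For this, use the law of sines. $AB$ is opposite the angle $2\beta$, $BC$ is opposite $2\alpha$, and $CA$ is opposite $\pi/3$. So $X_1:X_2:X_3=\sin2\beta:\sin2\alpha:\sin(\pi/3)$. The condition $X_1+X_2=X_3$ would then read
$$\sin2\alpha+\sin2\beta=\tfrac{\sqrt3}{2}.$$
The left side equals $2\sin(\alpha+\beta)\cos(\alpha-\beta)=\sqrt3\cos(\alpha-\beta)$, so we would need $\cos(\alpha-\beta)=1/2$, that is, $\alpha-\beta=\pm\pi/3$. Together with $\alpha+\beta=\pi/3$ this forces $\beta=0$ or $\alpha=0$, which is impossible. (It would equally contradict non-commensurability.) Hence $\zh(\partial T')\neq0$, a contradiction.

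One point needs checking: the sawtooth tiles near the corners may overlap geometrically. This is harmless, because $\zh$ is a formal sum over directed edges, and Lemma~\ref{lemma:kites} applies to virtual kites by the remark following it.
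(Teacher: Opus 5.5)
Your proof is correct and is essentially the paper's argument: you add sawtooth tiles, use the rotation factors $1,1,-1$ for $AB$, $BC$, $CA$, and conclude $\zh(\partial T')=(b-a)(X_1+X_2-X_3)$. The only difference is that the paper gets $X_1+X_2\neq X_3$ at once from the strict triangle inequality $X_1c+X_2c>X_3c$, with no need for the law of sines. Also note that rationality of $a/b$ comes from Lemmas~\ref{lemma:abrelation} and~\ref{lemma:stubborn}, not from the statement of Theorem~\ref{theorem:rationaltile120}.
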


\begin{proof} 
Let $ABC$ be tiled as given, with $(2\alpha, \pi/3, 2\beta)$ at $A,B,C$ respectively. Suppose, for proof by contradiction,
that the tiling does not have commensurable sides.  Let $(a,b,c)$ be the sides of the tile opposite
to $(\alpha,\beta,\gamma)$ respectively. By Lemma~\ref{lemma:stubborn}, we can assume $a$ and $b$ are integral and $c$ is irrational. By Theorem~\ref{theorem:rationaltile120},
each side of $ABC$ supports only tiles with the $c$ edge on the boundary of $ABC$.
As in the proof for equilateral $ABC$, we add sawtooth tiles
around the outside of the boundary of $ABC$, matching the $c$ edges.
The tiling $T^\prime$  of this extended polygon is composed of kites and parallelograms.  
Therefore $\zh(T^\prime) = 0$.  
Therefore 
\begin{eqnarray}
\zh(\partial T^\prime) = 0 \label{eq:836}
\end{eqnarray}
Let $kc$, $lc$, and $mc$ be the lengths of $AB$, $BC$, and $CA$ respectively. Then we must have $k, l, m > 0$ integers and 
 $k + \ell + m $ sawtooth tiles.  As before, the sawtooth tiles on the lower boundary 
 each contribute $b-a$ to $\zh(\partial T^\prime)$.  But now, the rotation factors are different.
 The angle of $AB$ is 0; the angle of $BC$ is $2\pi/3$, giving a rotation factor of $(-1)^2 = 1$.
 The angle of $CA$ is 
 $$2\pi/3 + (\pi-2\beta) = 2(\pi/3-\beta) + \pi = 2\alpha + \pi = 3(\pi/3) + 2\alpha.$$
 So $\zh$ of that angle is $(-1)^3 = -1$.  Hence
 $$ \zh(\partial T^\prime) = (b-a) (k + \ell - m).$$
 Since the lengths of the sides of $ABC$ are $kc, \ell c $, and $mc$,  we have
 $kc + \ell c > mc$.  Hence $k+\ell-m \neq 0$, contradicting (\ref{eq:836}), since $b \neq a$.
 \end{proof}

\section{Conclusion}
\label{sec:conclusion}
At the outset, we stated Theorem~\ref{theorem:main} as
\medskip

\noindent
\textbf{Theorem~\ref{theorem:main}} 
{\em Suppose a triangle $ABC$ is tiled by a tile $R$ with  sides $(a,b,c)$ and noncommensurable angles $(\alpha,\beta,\gamma = 2\pi/3)$. If $ABC$ is not similar to $R$, 
then $R$ has commensurable sides.}

\begin{proof} Theorem~\ref{theorem:rationaltile120} shows that any counterexample must have all $c$ edges
on the boundary, and $ABC$ must be equilateral or have angles $(2\alpha, 2\beta, \pi/3)$.  
Theorem~\ref{theorem:equilateralrational} and Theorem~\ref{theorem:lastcase} handle those two cases.
\end{proof}

Now recall the statement of Theorem~\ref{theorem:main-2}:
\medskip

\noindent
\textbf{Theorem~\ref{theorem:main-2}}
{\em Let triangle $ABC$ be tiled by a tile $R$ such that
    \begin{itemize}
        \item $R$ is not similar to ABC;
        \item $R$ is not a right triangle;
        \item $R$ does not have commensurable angles.
    \end{itemize}
    Then $R$ must have commensurable sides. }
\medskip  

\noindent
{\em Remark}.  The exceptions are necessary: any triangle $ABC$ can be tiled by a triangle similar to itself, and we can put together any two copies of an arbitrary right triangle to obtain an isosceles triangle, so we cannot conclude anything about the commensurability of the sides of the tile in these two cases. 
\medskip

\begin{proof}
In Table~\ref{table:citations}, we present cases from Table~\ref{table:laczkovich2} that meet  the 
hypotheses of Theorem~\ref{theorem:main-2}.  We add a column with
references for the proof of Theorem~\ref{theorem:main-2}
in those cases.
\begin{table}[ht]
\caption{Citations for the proof of Theorem~\ref{theorem:main-2}}
\label{table:citations}
\centering
\setlength{\extrarowheight}{.2em}
\begin{tabular}{lcl}
$ABC$ \quad &  The tile  & Citation    \\
\hline
equilateral              & $\alpha = \pi/3$ & \cite{laczkovich2012}, Lemma~3.2 and Theorem~3.3 \\
equilateral              & $\gamma = 2\pi/3$  & Theorem~\ref{theorem:main} of this paper \\
$(2\alpha,2\beta,\pi/3)$  & $\gamma = 2\pi/3$ & Theorem~\ref{theorem:main} of this paper \\
$(\alpha,\alpha,\pi-2\alpha)$  & $\gamma = 2\alpha$ & Theorem~10.5 of \cite{beeson-isosceles}\\
$(2\alpha,\beta,\alpha+\beta)$ & $3\alpha + 2\beta = \pi$  & Theorem~3 of \cite{beeson-triquadratics}\\
$(2\alpha,\alpha,2\beta)$ &$3\alpha + 2\beta = \pi$  &  Theorem~3 of \cite{beeson-triquadratics}\\
isosceles &$3\alpha + 2\beta = \pi$  &  Theorem~3 of \cite{beeson-triquadratics}
\end{tabular}
\end{table}
 \end{proof}
 
\appendix

\section{The gap in \texorpdfstring{\cite{laczkovich2012}}{[4]}}

The gap in question occurs at lines 13-14 on page~364 (the last lines of the second paragraph of the proof
of Lemma~7.1).
Fig.~\ref{figure:sevenpointone} illustrates the situation.  

\begin{figure} [ht]
\caption{Lemma 7.1}
\label{figure:sevenpointone}
\centering
\includegraphics[width=0.5\linewidth]{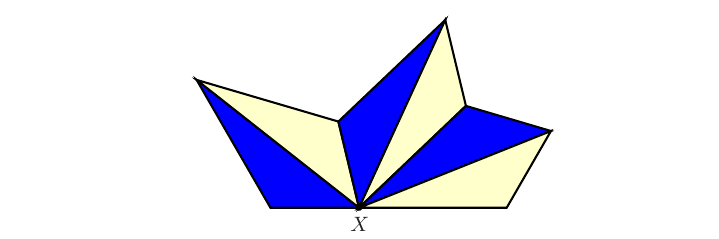}
\end{figure}

The six triangles $T_1,\ldots,T_6$ with vertices at $X$ are shown.
Their angles at $X$ are $\beta$, $\beta$, $\beta$, $\alpha$, $\alpha$, $\alpha$.
The out-degree of $\Gamma_a$ at $X$ is zero.
The in-degree at $X$ is 1, if $X$ lies on an internal edge of the tile below the figure.
That is contrary to assertion (i) of Lemma~7.1, and the assertion of the last two lines of the second paragraph
of the proof fails to hold in Fig.~1.  Moreover, the first lines of the next paragraph assert,
``The argument above shows that if the equation at a vertex $X$ is $3\alpha + 3\beta = \pi$, then an edge of $\Gamma_a$ starts from $X$.''  But that is patently false in the figure. 

This problem propagates to the $2\pi/3$ case of item~(iv) of Theorem~2.1 of \cite{laczkovich2012}, 
which is Theorem~\ref{theorem:equilateralrational} of this paper. (The hypothesis that the tiling is 
regular must be fulfilled in the case of equilateral $ABC$ for tiles with incommensurable angles.)
In this paper, we have supplied a correct proof.  Our theorem~\ref{theorem:lastcase} is not
implied by the results of \cite{laczkovich2012}, since the tiling is not regular.

\bibliographystyle{plain-annote}
%\bibliography{TriangleTiling}

\end{document}